\documentclass[12pt]{amsart}
\usepackage[margin=1in]{geometry}
%\topmargin 0ex \headheight 0ex \textwidth 5.9in \textheight 8.8in
%\font\erm=cmr8
 \numberwithin{equation}{section}

\newcommand{\ets}{\mbox{$e^{3\psi} $}}
\newcommand{\hc}{\mbox{$H(\overline{\Omega}) $}}
\newcommand{\ens}{\mbox{$e^{-\psi} $}}
\newcommand{\dut}{\mbox{$\left(\Delta u\right)^{-3} $}}
\newcommand{\duo}{\mbox{$\left(\Delta u\right)^{-1} $}}
\newcommand{\duw}{\mbox{$\left(\Delta u\right)^{-2} $}}
\newcommand{\ut}{\mbox{$\left(\Delta u_0(x)\right)^{-3} $}}
\newcommand{\bt}{\mbox{$\left(b_1(x)\right)^{-3} $}}
\newcommand{\btj}{\mbox{$\left(b_{1j}(x)\right)^{-3} $}}
\newcommand{\bo}{\mbox{$b_1(x) $}}
\newcommand{\boj}{\mbox{$b_{1j}(x) $}}
\newcommand{\bzj}{\mbox{$b_{0j}(x) $}}

\newcommand{\bz}{\mbox{$b_0(x) $}}

\newcommand{\skx}{\mbox{$\psi_k (x)$}}
\newcommand{\esk}{\mbox{$e^{3\psi_k }$}}

\newcommand{\esbj}{\mbox{$e^{3\overline{\psi}_j}$}}

\newcommand{\uk}{\mbox{$u_k $}}

\newcommand{\uko}{\mbox{$u_{k-1} $}}

\newcommand{\uvt}{\mbox{$\frac{u-v}{\tau} $}}

\newcommand{\utj}{\mbox{$\tilde{u}_j $}}
\newcommand{\ctj}{\mbox{$\tilde{\sigma}_j $}}
\newcommand{\ubj}{\mbox{$\overline{u}_j $}}
\newcommand{\sbj}{\mbox{$\overline{\psi}_j $}}
\newcommand{\rbj}{\mbox{$\overline{\rho}_j $}}
\newcommand{\rk}{\mbox{$\rho_k $}}
\newcommand{\rtj}{\mbox{$\tilde{\rho}_j $}}
\newcommand{\omt}{\mbox{$\Omega_T$}}
\newcommand{\io}{\mbox{$\int_\Omega$}}
\newcommand{\iot}{\mbox{$\int_{\Omega_T}$}}

\newcommand{\te}{\mbox{$\theta_\varepsilon $}}

\newtheorem{prop}{Proposition}[section]

\newtheorem{thm}{Theorem}[section]
\newtheorem{lem}{Lemma}[section]

\begin{document}
	\title[Analytic validation of a continuum model]{Analytical validation of a continuum model for the evolution of a crystal surface in multiple space dimensions }
	\author{Jian-Guo Liu and Xiangsheng Xu}\thanks
	%\address
	{Liu's address: Department of Physics and Department of Mathematics, Duke University, Durham, NC 27708. {\it Email:} jliu@phy.duke.edu.\\
	\indent Xu's address:	Department of Mathematics and Statistics, Mississippi State
		University, Mississippi State, MS 39762.
		{\it Email}: xxu@math.msstate.edu. To appear in SIAM J. Math. Anal..}
 
	\begin{abstract} In this paper we are concerned with the existence of a weak solution to the initial boundary value problem for the equation $\frac{\partial u}{\partial t} = \Delta\left(\Delta u\right)^{-3}$. This problem arises in the mathematical modeling of the evolution
		of a crystal surface. Existence of a weak solution $u$ with $\Delta u\geq 0$ is obtained via a suitable substitution. Our investigations reveal the close connection between this problem and the equation $\partial_t\rho+\rho^2\Delta^2\rho^3=0$,  another crystal surface model first proposed
		by H. Al Hajj Shehadeh, R. V. Kohn and J. Weare in Physica D: Nonlinear Phenomena, {\bf 240} (2011), no. 21, 1771-1784.
		%that it is the existence of a singular part in $\frac{1}{\rho}$ that leads to a variational inequality formulation of the problem.
	%\frac{\partial \rho}{\partial t} +\rho^2\Delta^2 \rho^3=0	
	\end{abstract}
	\keywords{ Existence, nonlinear fourth order
		parabolic equations, singularity, 
		crystal surface	models.}
	%\end{keywords}
		\subjclass{ 35D30, 35Q99, 35A01.}

	\maketitle

	\section{Introduction}
	\subsection{Problems and physical motivation}
	Let $\Omega$ be a bounded domain in $\mathbb{R}^N$ with $C^2$ boundary $\partial\Omega$.  For $T>0$ set
	\begin{equation*}
	\omt= \Omega\times(0,T),\ \ \ \Sigma_T= \partial\Omega\times(0,T).
	\end{equation*} Consider the initial boundary value problem
		\begin{eqnarray}
		\frac{\partial u}{\partial t} &=& \Delta\dut  \ \ \ \mbox{in $\omt$,}\label{p1}\\
		u &=& b_0 (x)\ \ \ \mbox{ on $\Sigma_T$},\label{p2}\\
		\Delta u &=& b_1 (x)\ \ \ \mbox{ on $\Sigma_T$},\label{p3}\\
		u(x,0)&=& u_0(x) \ \ \ \mbox{on $\Omega$}\label{p4}
		\end{eqnarray}
			for given data $b_0(x), \,\bo$, and $u_0(x)$ with properties:
			\begin{enumerate}
			\item[(H1)] $b_0(x)\in W^{1,2}(\Omega)\cap  L^\infty(\Omega) $;
			\item[(H2)] $b_1(x)\in W^{2,2}(\Omega)\cap  L^\infty(\Omega)$ and
			there is a positive number $c_0$ such that
			\begin{equation}
			b_1(x)\geq c_0\ \ \mbox{a.e. on $\Omega$,} 
			\end{equation}
			and thus we also have that $\bt\in W^{2,2}(\Omega)$;
			\item[(H3)] $u_0(x)\in W^{2,2}(\Omega)$, $\Delta u_0(x)\geq c_0$ a.e. on $\Omega$, and $\ut\in W^{2,2}(\Omega)$.
		\end{enumerate}
		One should have the compatibility condition
		$$\bo=\Delta u_0\ \ \ \mbox{on $\partial\Omega$.}$$
		%\end{thm}
		We investigate the existence of a solution $u$ to (\ref{p1})-(\ref{p4}) with $\Delta u\geq 0$.
		Our interest in this problem originated in the mathematical modeling of the evolution of a crystal surface. Such a surface below the roughing temperature consists of steps and terraces. By the Burton, Cabrera and Frank (BCF) model \cite{BCF}, atoms detach from the steps, diffuse across terraces, and reattach at new locations, inducing an overall evolution of the crystal surface. At the nanoscale, the motion of steps is described by large systems of ordinary differential equations for step positions \cite{AKW, GLL2}. At the macroscale, this description is often reduced conveniently to nonlinear PDEs for macroscopic variables, e.g., for the surface height and slope profiles (see \cite{AKW, GLL2} and the references therein). The manufacturing of crystal films lies at the heart of modern nanotechnology. How to accurately predict the motion of a crystal surface is of fundamental importance. In this effort, the development of continuum models has gained momentum. A number of PDE models have been proposed. See \cite{AKW, KDM, X, XX, MW} and the references therein. They are obtained as the continuum limit of a family of kinetic Monte Carlo models of crystal surface relaxation that includes both the solid-on-solid and discrete Gaussian models. 
		%The conventional approach is to use the surface height as a function of space and time to describe the motion of a crystal surface. As it is found out in \cite{AKW}, it is more advantageous to work with the surface slope as a function of height and time in the one dimensional case.  
		
		If the space dimension $N$ is $1$, (\ref{p1})-(\ref{p4}) has a clear connection to a couple of existing models.
		 Set
		\begin{equation}\label{rr1}
		\rho=\duo.
		\end{equation}
		Take the Laplacian of both sides of (\ref{p1})  and substitute $\Delta u=\rho^{-1}$ into the resulting equation to obtain
		\begin{equation}
		\partial_t\rho^{-1}=\Delta^2\rho^3.
		\end{equation}
		Note that $b_0(x)$ is assumed to be a function of $x$ only. Thus by (\ref{p1}), we have  that
		\begin{equation}\label{ptu}
		\Delta\rho^3=\partial_t u=0\ \ \mbox{on $\Sigma_T$.}
		\end{equation}
		Consequently, $\rho$ satisfies the problem
		\begin{eqnarray}
	\frac{\partial \rho}{\partial t}+\rho^2\Delta^2\rho^3 &=& 0  \ \ \ \mbox{in $\omt$,}\label{cr1}\\
	\rho &=&\rho_b(x) \ \ \ \mbox{ on $\Sigma_T$},\label{cr2}\\
	\Delta \rho^3 &=& 0\ \ \ \mbox{ on $\Sigma_T$},\label{cr3}\\
	\rho(x,0)&=&\rho_0(x)  \ \ \ \mbox{on $\Omega$,}\label{cr4}
	\end{eqnarray}
	where
	$$\rho_b(x)	=\frac{1}{b_1 (x)},\ \ \ \rho_0(x)= \frac{1}{\Delta u_0(x)}.$$
	 In \cite{AKW}, (\ref{cr1})-(\ref{cr4}) is proposed  as a continuum model for the evolution of an one-dimensional monotone step train separating two facets of a crystal surface. In this case, $x\in \mathbb{R}^1$ is the surface height and $\rho$ the surface slope. Since the surface height is increasing, we have that $\rho\geq 0$. This together with (\ref{rr1}) indicates
	 \begin{equation}
	 \Delta u\geq 0\ \ \ \mbox{on $\Omega$.}
	 \end{equation}
%The main focus of  is 

Starting with the ODE's for the velocities of the steps, the authors of \cite{AKW} analyzed the system of ODE's giving the evolution of the discrete slopes and obtained its limiting behaviors both as the time goes to infinity and as the number of steps goes to infinity, while \cite{MN} complemented the results by focusing its attention on the self-similar solutions. Questions concerning existence, uniqueness, and asymptotic self-similarity for the continuum model suggested in \cite{AKW} are left open there. 
In \cite{GLL},  the existence of 
a weak solution to (\ref{cr1}) coupled with the initial and periodic boundary conditions is established.
% for the one-dimensional case. 
However, the construction of the weak solution there depends on the assumption that the space dimension is $1$ in an essential way. 
Another physical perspective from which to view (\ref{p1})- (\ref{p4}) can be obtained from  \cite{GLL}.  %provides anadditional.  
Denote by $\varphi(x,t)$ the step location of a crystal surface as a function of the surface height $x$ and the time $t$. Then $\phi=\partial_x\varphi$ satisfies the equation
\begin{equation}
\partial_t\phi=\partial_{xxxx}\left(\frac{1}{\phi^3}\right).
\end{equation}
Thus the function $u$ in (\ref{p1}) is a second order anti-derivative of $\phi$, i.e., $\partial_{xx} u=\phi$.

	%This problem arises in the study of crystal surfaces. 

As observed in \cite{AKW}, working with the surface slope as a function of height has its advantages, but this can only be done in one space dimension. 	In \cite{LX}, we investigate the problem
	\begin{eqnarray}
	\frac{\partial u}{\partial t} &=& \Delta e^{-\Delta u}  \ \ \ \mbox{in $\omt$,}\label{o1}\\
	\nabla u\cdot\nu=\nabla e^{-\Delta u}\cdot\nu &=& 0 \ \ \ \mbox{ on $\Sigma_T$},\label{o2}\\
	u(x,0)&=& u_0(x) \ \ \ \mbox{on $\Omega$},\label{o3}
	\end{eqnarray}
	%equations of the form
	%Problem (\ref{p1})-(\ref{p4}) is also related to 
	where $\nu$ is the unit outward normal vector to the boundary, in multiple space dimensions.  This problem was first proposed in \cite{MW} to model crystal surfaces. 
	In this case, $u$ is the surface height. Obviously, (\ref{p1}) can be obtained by replacing the exponential function $e^{-s}$ in (\ref{o1}) by the power function $s^{-3}$. This connection is the key to our mathematical analysis of (\ref{p1})-(\ref{p4}). In fact, we will draw many inspirations from \cite{LX}. In spite of this, it is important to note the difference between the two problems. In \cite{LX}, we are just interested in the existence of a solution, while in (\ref{p1})-(\ref{p4}) we must obtain a solution $u$ with $\Delta u\geq 0$.
	
	As explained in \cite{LX}, the equation (\ref{o1}) can be recast in a variational form. Denote an energy functional by
	${\mathcal E}(u) = \tfrac12 \int_{\Omega} |\nabla u|^2\,dx$. 
	Equation (\ref{o1}) can be recasted as a Cahn-Hilliard equation with 
	curvature-dependent mobility
	\begin{equation} \label{variational}
	\frac{\partial u}{\partial t} 
	=  \mbox{div}\left(\mathcal{M}  \nabla \frac{\delta {\mathcal E} }{\delta u}\right), \quad 
	\mathcal{M}=e^{ \frac{\delta {\mathcal E}}{\delta u} }  
	=e^{- \Delta u }  
	\end{equation}
	and it possesses an energy-dissipation relation
	$$
	\frac{ d {\mathcal E}}{d t} 
	= -  \int_{\Omega} e^{ \frac{\delta {\mathcal E}}{\delta u} }  | \nabla \frac{\delta {\mathcal E} }{\delta u} |^2\, dx
	= -  4 \int_{\Omega} | \nabla e^{ \tfrac12\frac{\delta {\mathcal E}}{\delta u} } |^2\, dx \,.
	$$
	The exponential nonlinearity in the curvature-dependent mobility models the asymmetric behavior of the convex and concave  crystal surface in solid on solid interface growth.  Some in depth discussions on the exponential nonlinearity and numerical simulations
	were conducted in \cite{MW}. Similarly, the equation (\ref{p1}) can also be recast in the above variational form with the curvature-dependent mobility given by
	$$
	\mathcal{M}=\frac{3}{(\Delta u)^4} \,.
	$$
	Note that this curvature-dependent mobility has even more singular behavior than that of the exponential one in (\ref{variational}).
%	A more general energy functional can be taken as 
%	$$
%	{\mathcal E}(u) = a \int_{\Omega} |\nabla u|\,dx$, $1\le p\le 2
%	$$. 
%	

 The objective of this paper is to develop an existence theory for (\ref{p1})-(\ref{p4}). As a by-product of our development, we shed light on the existence assertion for (\ref{cr1})-(\ref{cr4}) in high space dimensions. In particular,
%	generalize the one-dimensional result in \cite{GLL} to the multi-dimensional case, %while 
%	at the same time replacing 
% the periodic boundary condition there by the Dirichlet boundary condition, which seems to be physically more meaningful.  Then 
the existence result in \cite{GLL2} is a consequence of our development.

	%First we are motivated by \cite{LX} to 
	\subsection{A priori estimates}
	To motivate the definition of a weak solution, we need to derive a priori estimates for smooth solutions of (\ref{p1})-(\ref{p4}) with $\Delta u\geq 0$ on $\Omega_T$.
	To begin, 
	we
	square both sides of the equation $\partial_tu-\Delta\rho^3=0$,  integrate the resulting one over $\Omega$, and thereby obtain
	\begin{equation}\label{e11}
	-2\int_{\Omega}\partial_tu \Delta \rho^3\, dx+\int_{\Omega}(\partial_tu)^2\, dx
	+\int_{\Omega}\left(\Delta \rho^3\right)^2 \, dx=0.
	\end{equation}
	We calculate, with the aid of (\ref{ptu}), that 
	\begin{eqnarray}
	-2\int_{\Omega}\partial_tu \Delta \rho^3\, dx&=&-2\int_{\Omega}\partial_tu \Delta \left(\dut-\bt\right)\, dx-2\int_{\Omega}\partial_tu \Delta \bt\, dx\nonumber\\
	&=&-2\int_{\Omega}\partial_t\Delta u  \left(\dut-\bt\right)\, dx-2\int_{\Omega}\partial_tu \Delta \bt\, dx\nonumber\\
	&=&\frac{d}{dt}
	\int_{\Omega}\left[\duw+2\Delta u  \bt-2u \Delta \bt\right]\,dx.\label{e12}
	\end{eqnarray}
	Note that
	\begin{equation}
	|u(x,t)|\leq\int_{0}^{T}|\partial_t u(x,t)|dt+|u_0(x)|.
	\end{equation}
	Subsequently, we have
	\begin{equation}
	\io |u(x,t)|^2dx\leq c\iot|\partial_t u(x,t)|^2dxdt+c.\label{e13}
	\end{equation}
	Here and in what follows the letter $c$ denotes a positive number depending only on the given data. Substitute (\ref{e12}) into (\ref{e11}), integrate the resulting equation with respect $t$ over $(0, s)$, where
	$s\in (0, T]$, and keep in mind (\ref{e13}) and the assumption that $b_1(x)$ is bounded to derive 
	%Integrating this over $(0,s), 0<s\leq T$, yields
	\begin{equation}
	\int_{\Omega} \left(\rho^2+\rho^{-1}\right)\, dx
	+\int_{\Omega_s}\left[(\partial_tu)^2+\left(\Delta\rho^3\right)^2\right] \, dx \, dt\leq c,\label{e15}
	\end{equation}
	where $\Omega_s=\Omega\times(0,s)$. This is our first a priori estimate.
	
	We can infer from the Calderon-Zygmund inequality that
	\begin{equation}\label{e14}
	\int_{0}^{T}\|\rho^3\|^2_{W^{2,2}(\Omega)}dt\leq c\int_{\Omega_T}\left(\Delta\rho^3\right)^2 \, dx \, dt
	+c\|\bt\|^2_{W^{2,2}(\Omega)}\leq c.
	\end{equation}
	
	To derive our second a priori estimate, we multiply each term in (\ref{p1}) by $\Delta u-b_1(x) $ and then
	integrate over $\Omega$ to obtain
	\begin{equation}\label{e21}
	\int_{\Omega}\partial_tu(\Delta u-b_1(x))\, dx
	=\int_{\Omega}\Delta\dut(\Delta u-b_1(x))\, dx.
	\end{equation}
	Keeping (\ref{ptu}) in mind, we compute
	\begin{eqnarray}
	\int_{\Omega}\partial_tu(\Delta u-b_1(x))\, dx&=&-\int_{\Omega}\partial_t\nabla u\nabla u\, dx-\int_{\Omega}\partial_tu\bo\, dx\nonumber\\
	&=&-\frac{d}{dt}\int_{\Omega}\left(\frac{1}{2}|\nabla u|^2+u\bo\right)\, dx.\label{e22}
	\end{eqnarray}
	The right-hand side of (\ref{e21}) can be estimated as follows.
	\begin{eqnarray}
	\lefteqn{\int_{\Omega}\Delta\dut(\Delta u-b_1(x))\, dx}\nonumber\\
	&=&-\int_{\Omega}\nabla\dut\nabla(\Delta u-b_1(x))\, dx\nonumber\\
	&=&3\int_{\Omega}\left(\Delta u\right)^{-4}|\nabla\Delta u|^2\, dx+\int_{\Omega}\nabla\left(\dut-\bt\right)\nabla\bo\, dx\nonumber\\
	&&+\int_{\Omega}\nabla\bt\nabla\bo\, dx\nonumber\\
	&=&3\int_{\Omega}|\nabla\rho|^2\, dx-\int_{\Omega}\left(\dut-\bt\right)\Delta\bo\, dx\nonumber\\
	&&+\int_{\Omega}\nabla\bt\nabla\bo\, dx\label{e23}
	\end{eqnarray}
	Use (\ref{e22}) and (\ref{e23}) in (\ref{e21}), integrate the resulting equation with respect to $t$, %remember that $\bo$ is assumed to be bounded away from $0$ below, 
	and thereby obtain
	\begin{equation}
	\int_{\Omega}|\nabla u|^2\, dx+\int_{\Omega_s}|\nabla\rho|^2\, dx \, dt\leq c\int_{\Omega_s}\rho^6\, dx \, dt+c\int_{\Omega}u^2 \, dx+c\leq c.\label{e24}
	\end{equation}
	The last step is due to (\ref{e14}) and (\ref{e13}).
	
	For more a priori estimates,
 we differentiate (\ref{p1}) with respect to $t$, multiply through the resulting equation by $\partial_t u$, and
	integrate over $\Omega$ to obtain
	\begin{eqnarray}
	\frac{1}{2}\frac{d}{dt}\int_{\Omega}(\partial_t u)^2\, dx
	&=&\int_{\Omega}\partial_t\Delta\dut\, \partial_t u\, dx\nonumber\\
	&=&\int_{\Omega}\partial_t\dut\,\partial_t\Delta u\, dx\\
	&=&-3\int_{\Omega}\left(\Delta u\right)^{-4} \, |\partial_t\Delta u|^2 \, dx\label{e31}
	=-3\int_{\Omega_s}|\partial_t \rho|^2\, dx
	\end{eqnarray}
	Here we have used (\ref{ptu}) and the fact that
	\begin{equation}
	\partial_t\dut=0 \ \ \mbox{on $\Sigma_T$.}
	\end{equation}
	%	from whence follows our third a priori estimate
	Integrating (\ref{e31}) with respect to $t$ gives
	\begin{eqnarray}
	\frac{1}{2}\int_{\Omega}|\partial_t u(x,s)|^2\,dx
	+3\int_{\Omega_s}|\partial_t \rho|^2 \, dx\, dt&=&	
	\frac{1}{2}\int_{\Omega}|\partial_t u(x,0)|^2 \, dx\nonumber\\
	&=&\frac{1}{2}\int_{\Omega}|\Delta \bt|^2\, dx.\label{e32}
	\end{eqnarray}
	This together with (\ref{p1}) and (\ref{cr1}) yields
	\begin{equation}
	\rho^3\in L^\infty(0,T; W^{2,2}(\Omega)),\ \ \ \rho^2\Delta^2\rho^3\in L^2(\Omega_T).\label{ssm22}
	\end{equation}	 	
	
	%	To summarize our preceding calculations, we have:
	%	\begin{enumerate}
	%		\item[(E1)] $\partial_t u, |\nabla u|\in L^\infty(0,T; L^2(\Omega)), \quad
	%		\Delta u\in L^\infty(0,T; L^1(\Omega))$;
	%		\item[(E2)] $
	%		 \rho^3\in L^\infty(0,T; W^{2,2}(\Omega)), \quad 
	%		 \rho\in W^{1,2}(\omt)$.
	%	\end{enumerate}
	%	Obviously, (E2) is due to the Carderon-Zygmund inequality and the fact that $\Delta\rho^3=\partial_t %u\in L^\infty(0,T; L^2(\Omega))$.

These a priori estimates 
%inspire us   our definition of a weak solution.
		%one of which is the observation that $\Delta u$ may have  a singular part. This 
		motivate the 
	following definition of a weak solution.

\noindent {\bf Definition 1.1.} 
We say that a pair $(u,\rho)$ is a weak solution to (\ref{p1})-(\ref{p4}) if the following conditions hold:
\begin{enumerate}
	\item[(D1)]
	$\rho\in W^{1,2}(\omt), \rho^3\in L^\infty(0,T;W^{2,2}(\Omega)), \rho\geq 0$ a.e. on $\omt$, and $\frac{1}{\rho}\in  L^\infty(0,T;L^{1}(\Omega))$;
	%\cap  L^2(0,T; W^{-1,2}(\Omega))
	\item[(D2)] 
	$u\in C([0,T]; L^2(\Omega)),  
	\frac{\partial u}{\partial t}, |\nabla u|\in  L^\infty(0,T;L^{2}(\Omega))$;
	 \item[(D3)] We have
	 \begin{equation}
	 \Delta u=\frac{1}{\rho}+\nu_s,
	 \end{equation} where $\nu_s\in L^2(0,T; W^{-1,2}(\Omega))$ is a non-negative, finite Radon measure on $\omt$ whose support is contained in the set $A_0$, where
	 \begin{equation}
	 A_0=\{(x,t)\in \omt: \rho(x,t)=0\};
	 \end{equation}
%$	\Delta u\in \mathcal{M}(\Omega_T)\cap L^2(0,T;\left(W^{1,2}(\Omega)\right)^*)$, where %$\left(W^{1,2}(\Omega)\right)^*$ is the dual space of $W^{1,2}(\Omega)$ and $\mathcal{M} %(\overline{\Omega_T})$ is the space of signed Radon measures on $\overline{\Omega_T}$;
%	] The term
%	$\Delta u$ has a decomposition
%	$$\Delta u=g_a+\nu_s$$
%	with respect to the Lebesgue measure in the sense of Lebesgue Decomposition Theorem ,  %where $g_a$ is the absolutely continuous part of $\Delta u$ and $\nu_s$ is the singular part. That is, the %support of $\nu_s\equiv A_0$ has Lebesgue measure $0$.  Then there holds
%	\begin{equation}
%	\rho=\frac{1}{\, g_a}\ \ \ \mbox{a.e. on $\omt$,}\label{ns1}
%	%\setminus A_0$, and $\rho=0$ on $A_0$.}
%	\end{equation}
	% in the decomposition of $-\Delta u$ with respect to the %Lebesgue measure ;
	% as in the Lebesgue Decomposition Theorem 
	%given that
	% let
	% \begin{equation}
	% -\Delta u=g_a+\nu_s
	%\end{equation} be , and $\nu_s$ is the singular part. Then 
	% where $A_0$ is the support of $\nu_s$; 
	\item[(D4)] There hold
	%there hold the equations
	%We have
	\begin{eqnarray}
	\frac{\partial u}{\partial t} &=& \Delta \rho^3 \ \ \ \mbox{a.e. on $\omt$,}\label{ow11}\\
	 u &=& \bz \ \ \ \mbox{a.e. on $\Sigma_T$},\\
 \rho &=& \frac{1}{\bo}\ \ \ \mbox{a.e. on $\Sigma_T$}.
	\end{eqnarray}
	%	where $g_a$ is the absolutely continuous part of $-\Delta u$ with respect to
	%	the Lebesgue measure in the Lebesgue Decomposition Theorem;
	The initial condition (\ref{p4}) is satisfied in the space $C([0,T]; L^2(\Omega))$.
\end{enumerate}

Obviously, (D3) is the decomposition of $\Delta u$ with respect to the Lebesgue measure (\cite{EG}, p.42) and $\frac{1}{\rho}$ turns out to be the absolutely continuous part. According to the analysis in \cite{LX}, the possible existence of a singular part in $\Delta u$ is an intrinsic property of our weak solutions. No matter how smooth $\rho$ is, as long as $A_0$ is not empty we can have  $\nu_s\ne 0$.
%Notice that the existence of  has nothing to do with how smooth $\rho$ is and has 
%everything to do with if  or not.
%Obviously, if $\rho>0$ then $\nu_s=0$.	In spite of the possible existence of a singular part in the exponent, we still have
%Obviously, $g_a$ here is not the negative Laplacian of $u$ in the sense of distributions. In section 3, we %will offer another perspective from which to view the function. 
%In a rather reasonable way, 
%It can be taken to be the  Laplacian of $u$ in a point-wise a.e. sense \cite{X2}.

\subsection{Main results}
Our main result is the following 
\begin{thm}	Let $\Omega$ be a bounded domain in $\mathbb{R}^N$ with $C^2$ boundary and assume that (H1)-(H3) hold. Then there is a %unique 
	weak solution to (\ref{p1})-(\ref{p4}) in the sense of Definition 1.1. Furthermore, the uniqueness assertion holds for these solutions whose Laplacians have no singular parts. That is, there is only
	one weak solution $u$ to (\ref{p1})-(\ref{p4}) with $\nu_s=0$.
\end{thm}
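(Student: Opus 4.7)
I would build the weak solution by a semi-implicit time discretization (Rothe method) carried out in the substituted variable $\rho$ rather than directly in $u$, closely mirroring the strategy used in \cite{LX} for the exponential mobility case. Fix $\tau=T/n$ and, at step $k$, given $\rho_{k-1}\geq c_0^{-1}$, seek $\rho_k>0$ solving the nonlinear fourth-order elliptic problem
\begin{equation*}
\frac{\rho_k^{-1}-\rho_{k-1}^{-1}}{\tau}=\Delta^{2}\rho_k^{3}\ \ \text{in }\Omega,\qquad \rho_k=b_1^{-1},\ \Delta\rho_k^{3}=0\ \ \text{on }\partial\Omega,
\end{equation*}
and then recover $u_k\in W^{2,2}(\Omega)$ from $\Delta u_k=\rho_k^{-1}$, $u_k=b_0$ on $\partial\Omega$ via the Poisson equation. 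The discrete problem has a convex variational structure: writing $v=\rho^{3}$ it becomes the Euler--Lagrange equation of
$J(v)=\frac{1}{2}\int_\Omega|\Delta v|^{2}dx+\frac{3}{2\tau}\int_\Omega v^{2/3}dx-\frac{1}{\tau}\int_\Omega\rho_{k-1}^{-1}v\,dx$
over an affine class with $v=b_1^{-3}$ and $\Delta v=0$ on $\partial\Omega$. To avoid the singularity at $v=0$ I would first replace $v^{-1/3}$ by $(v+\varepsilon)^{-1/3}$, obtain minimizers by the direct method, show a uniform positive lower bound via the $L^\infty$-estimate coming from the maximum principle applied to $\rho_k$ (or an auxiliary barrier argument), and send $\varepsilon\to 0$.

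Once $\{u_k,\rho_k\}$ exist, I would form piecewise-constant and piecewise-linear in-time interpolants and derive the discrete analogues of the \emph{a priori} estimates (\ref{e15}), (\ref{e14}), (\ref{e24}) and (\ref{e32}). The discrete derivations track the continuous ones line by line, with the telescoping identity $\partial_t\Delta u\cdot(\Delta u)^{-3}\to\frac{d}{dt}(\Delta u)^{-2}$ replaced by the convexity inequality $(a^{-2}-b^{-2})\leq -2b^{-3}(a-b)$ applied to $a=\Delta u_k$, $b=\Delta u_{k-1}$; this is the standard trick for making the Rothe scheme respect the energy structure. These estimates then deliver, uniformly in $\tau$, control of $\rho^{3}$ in $L^\infty(0,T;W^{2,2}(\Omega))$, of $\partial_t u$ and $|\nabla u|$ in $L^\infty(0,T;L^{2}(\Omega))$, of $\partial_t\rho$ in $L^{2}(\Omega_T)$, and of $\rho^{-1}$ in $L^\infty(0,T;L^{1}(\Omega))$, which matches exactly (D1)--(D2).

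By Aubin--Lions I extract a subsequence with $\rho_\tau\to\rho$ strongly in $L^{2}(\Omega_T)$ (and a.e.), $\rho_\tau^{3}\to\rho^{3}$ weakly-$\ast$ in $L^\infty(0,T;W^{2,2}(\Omega))$, $u_\tau\to u$ in $C([0,T];L^{2}(\Omega))$, and $\partial_t u_\tau\rightharpoonup\partial_t u$ weakly-$\ast$ in $L^\infty(0,T;L^2(\Omega))$. Passing to the limit in $\partial_t u_\tau=\Delta\rho_\tau^{3}$ gives (\ref{ow11}). The main obstacle, and the place where most care is needed, is the identification (D3): the discrete identity $\Delta u_\tau=\rho_\tau^{-1}$ is lost in the limit wherever $\rho\to 0$. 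Following the approach in \cite{LX}, I would use Fatou's lemma on $\rho_\tau^{-1}$ together with the uniform $L^\infty(0,T;L^{1})$ bound to conclude $\rho^{-1}\in L^\infty(0,T;L^{1})$ and to show that $\nu_s:=\Delta u-\rho^{-1}$ is a non-negative Radon measure; its support lies in $A_0$ because on any set $\{\rho\geq\delta>0\}$ the pointwise a.e.\ convergence $\Delta u_\tau=\rho_\tau^{-1}\to\rho^{-1}$ is dominated, so no mass of $\nu_s$ can concentrate off $A_0$. The $W^{-1,2}$ regularity of $\nu_s$ follows from the $L^\infty(0,T;L^2)$ bound on $\nabla u$, since $\nu_s=\Delta u-\rho^{-1}$ is the difference of a distributional Laplacian of an $H^{1}$-function and an $L^1$ function.

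For uniqueness under $\nu_s=0$ the argument is short: if $u,v$ are two such solutions then $\Delta u,\Delta v>0$ a.e., $\rho_u=(\Delta u)^{-1}$, $\rho_v=(\Delta v)^{-1}$, and $\partial_t(u-v)=\Delta[(\Delta u)^{-3}-(\Delta v)^{-3}]$. Testing with $u-v$ (which vanishes on $\Sigma_T$ together with $\Delta(u-v)$ on account of (\ref{p2})--(\ref{p3})) and integrating by parts twice gives
\begin{equation*}
\frac{1}{2}\frac{d}{dt}\int_\Omega(u-v)^{2}dx=\int_\Omega\bigl[(\Delta u)^{-3}-(\Delta v)^{-3}\bigr]\Delta(u-v)\,dx\leq 0,
\end{equation*}
because $s\mapsto s^{-3}$ is strictly decreasing on $(0,\infty)$. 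Combined with $u(\cdot,0)=v(\cdot,0)$ this forces $u\equiv v$. I expect the main technical difficulty to be the identification of the singular part $\nu_s$ and its concentration on $A_0$; everything else is a careful but standard adaptation of \cite{LX}, with the extra ingredient that the convexity of $s^{-2}$ is used to force $\Delta u\geq 0$ to survive the limit.
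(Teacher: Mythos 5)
There is a genuine gap, and it sits at the very center of the construction. You propose to discretize the fourth-order equation for $\rho$ directly and to obtain the positive lower bound $\rho_k>0$ ``via the $L^\infty$-estimate coming from the maximum principle applied to $\rho_k$ (or an auxiliary barrier argument).'' But the discrete problem for $\rho_k$ is a fourth-order elliptic equation, and no maximum principle or barrier argument is available for it; indeed the paper stresses exactly this point (if $\rho$ solves (\ref{cr1}) so does $-\rho$, and nothing in the structure of the equation favors the non-negative branch), and states explicitly that a sequence of positive approximate solutions to (\ref{cr1})--(\ref{cr4}) can only be constructed directly in one space dimension. Your fallback, the variational formulation in $v=\rho^3$, does not repair this: the functional you write is not convex ($v\mapsto v^{2/3}$ is concave, so $+\tfrac{3}{2\tau}\int v^{2/3}$ is a concave term, and in fact its Euler--Lagrange equation carries the wrong sign relative to $\rho_k^{-1}-\rho_{k-1}^{-1}=\tau\Delta^2\rho_k^3$), and even for the sign-corrected convex functional one cannot extract positivity of a minimizer of an $\int_\Omega|\Delta v|^2$-dominated energy by truncation, since truncating $v$ at $0$ does not decrease $\int_\Omega|\Delta v|^2$. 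So the step ``show a uniform positive lower bound \dots and send $\varepsilon\to0$'' is precisely the open difficulty, not a routine regularization.

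The paper's route avoids this entirely and is the essential idea your proposal is missing: it never discretizes the $\rho$-equation. Instead it sets $\Delta u=e^{-\psi}$, which builds the sign of $\Delta u$ into the ansatz, and at each time step solves the \emph{coupled system of two second-order} elliptic equations (\ref{ap1})--(\ref{ap4}) (with an extra regularizing term $\tau\psi$) by the Leray--Schauder theorem; the $L^\infty$ bounds on $\psi$ and $u$ there come from genuine second-order maximum-principle/test-function arguments, which is exactly what your scheme lacks. The remaining parts of your outline are broadly consonant with the paper: the a priori estimates you list are the right ones (modulo the convexity inequality being evaluated at the implicit time level, i.e.\ $a^{-2}-b^{-2}\leq -2a^{-3}(a-b)$ rather than with $b^{-3}$), the identification of $\Delta u=\rho^{-1}+\nu_s$ with $\nu_s$ supported in $A_0$ via Fatou and a cut-off is the same strategy as Proposition 3.6, and your uniqueness argument by testing $\partial_t(u-v)=\Delta[(\Delta u)^{-3}-(\Delta v)^{-3}]$ against $u-v$ and using monotonicity of $s\mapsto s^{-3}$ is the standard one the paper delegates to \cite{LX}. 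But without a working construction of positive approximate solutions, the existence half of the theorem is not proved.
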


For each $T>0$ there is a weak solution $u$ on $\Omega_T$. In this sense, our weak solution is global.
For the uniqueness part of Theorem 1.1, we refer the reader to \cite{LX}.
% is identical to 

Does $\rho$ obtained in Theorem 1.1 satisfy (\ref{cr1}) in a suitable sense? That is, we want to find out whether (\ref{ow11}) combined with (D3) implies (\ref{cr1}). If $\rho$ is continuous on $\omt$, the answer is yes.
%We claim that if it , then $\rho$ is a solution of (\ref{cr1}). 
To see this, we first note that the set $\omt\setminus A_0$ is open. Then we can infer from (D3) that
 \begin{equation}
 \Delta u =\frac{1}{\rho}\ \ \mbox{in $\mathcal{D}^\prime(\omt\setminus A_0)$, and therefore everywhere on $\omt\setminus A_0$.}
 \end{equation}
 Observe that we have
 \begin{equation}
 \frac{1}{\rho}\in L^\infty_{\mbox{loc}}(\omt\setminus A_0)\cap W^{1,2}_{\mbox{loc}}(\omt\setminus A_0).
 \end{equation}
Take the Laplacian of both sides of (\ref{ow11}) to yield
 \begin{equation}
 -\frac{1}{\rho^2}\partial_t\rho=\Delta^2\rho^3\ \ \mbox{in $\omt\setminus A_0$.}
 \end{equation}
Thus $\Delta^2\rho^3$ exists as a function on $\omt\setminus A_0$ and $\rho^2\Delta^2\rho^3=-\partial_t\rho\in L^2(\omt\setminus A_0)$. Since $A_0$ has Lebesgue measure $0$, we have
\begin{equation}
\partial_t\rho+\rho^2\Delta^2\rho^3=0\ \ \mbox{a.e. on $\omt$.}
\end{equation}

% prove the existence theorem in \cite{GLL2}.

 If $\Omega=(0,1)$, then (D1) combined with Lemma 2.2 in Section 2 asserts that $\rho$ is continuous on $\omt$.
Thus our theorem implies the existence result in \cite{GLL2}. At the end of Section 3, we will briefly indicate how we can use our approximate scheme to construct a sequence of approximate solutions that are uniformly H\"{o}lder continuous on $\overline{\omt}$ in the one-dimensional case.

If $\nu_s=0$, the answer is also yes. In this case we can deduce from  (\ref{ow11}) that
\begin{equation}
\partial_t\left(\frac{1}{\rho}\right)=\Delta^2\rho^3\ \ \mbox{in $\mathcal{D}^\prime(\omt)$.}\label{osd1}
\end{equation}
Let $\xi$ be a $C^\infty$ function on $\mathbb{R}^N\times\mathbb{R}$ with the property
\begin{equation}\label{bcd}
\xi\mid_{\partial\Omega_T}=0.
\end{equation}
%$$$$
It is not difficult to see that we can use $\xi\rho^3$ as a test function in the above equation, from whence follows
\begin{equation}
-\int_{\Omega_T}\rho\partial_t\rho\xi dxdt
=\int_{\Omega_T}\Delta\rho^3\Delta\left(\rho^3\xi\right) dxdt.\label{osa2}
\end{equation}
With the aid of (\ref{bcd}), we can use the preceding equation as a weak formulation for (\ref{cr1})  and (\ref{cr3}).
%Suppose that $\rho$ is so regular that we can use

If $\rho$  satisfies the additional integrability conditions
\begin{equation}
\rho\partial_t\rho\in L^2(0,T;W_0^{1,2}(\Omega)), \ \ \rho^2\in L^2(0,T;W^{2,2}(\Omega)),\label{ssd1}
\end{equation} 
then the answer is still yes. In this case, (\ref{ow11}) implies
%To see this, 
%we take the Laplacian of both sides of  to yield
\begin{equation}
\partial_t\left(\frac{1}{\rho}+\nu_s\right)=\Delta^2\rho^3\ \ \mbox{in $\mathcal{D}^\prime(\omt)$.}\label{ssm11}
\end{equation}
Let $\xi$ be given as before.
%a $C^\infty$ function on $\mathbb{R}^N\times\mathbb{R}$ with the property
%\begin{equation}\label{bcd}
%\xi\mid_{\partial\Omega_T}=0.
%\end{equation}
%$$$$
%Suppose that $\rho$ is so regular that we can use
The function  $\rho^2\xi$ is a legitimate test function for (\ref{ssm11}). Upon using it, we arrive at
\begin{equation}
-\int_{\Omega_T}\partial_t\rho\xi dxdt
-\int_{\Omega_T}\left(2\rho\partial_t\rho\xi+\rho^2\partial_t\xi\right)d\nu_s
=\int_{\Omega_T}\Delta\rho^3\Delta\left(\rho^2\xi\right) dxdt.\label{vi1}
\end{equation}
%\lefteqn{\int_{\Omega} \rho^2(x,T)\xi(x,T)d\nu_s(T)-\int_{\Omega} \rho_0^2\xi(x,0)d\nu_s(0)}\nonumber\\
The item (D3) asserts that the second integral in the above equation is $0$. 
%The second integral is also $0$ because we have assumed that $\rho_0>0$. 
This leads to the equation
\begin{equation}
-\int_{\Omega_T}\partial_t\rho\xi dxdt
=\int_{\Omega_T}\left(\Delta\rho^3\Delta\rho^2\xi+2\Delta\rho^3\nabla\rho^2\nabla\xi+\Delta\rho^3\rho^2\Delta\xi\right) dxdt.
%\partial_t\rho %+\Delta\rho^3\Delta\rho^2-2\mbox{div}\left(\Delta\rho^3\nabla\rho^2\right)+\Delta\left(\Delta\rho^3\rho^2\ri%ght)=0\ \ \mbox{in $\mathcal{D}^\prime(\omt)$.}
\end{equation}
%With the aid of (\ref{bcd}), 
As before, this equation can be viewed as a weak formulation for (\ref{cr1}) and (\ref{cr3}).
%A close examination reveals that for the preceding calculations to work one at least needs to have
%Now we turn our attention to the general case. 
Unfortunately, we have not been able to deduce (\ref{ssd1}).
%neither of them are available to us. 
Thus we seek to further weaken the notion of a solution for (\ref{cr1})-(\ref{cr4}).

%If we take $\xi$ to be non-negative in (\ref{vi1}), we are led to 
%We shall see that the function $\rho$ in Theorem 1.1 satisfies the differential inequality
%\begin{equation}
%\rho\partial_t\rho+\rho^3\Delta^2\rho^3\leq 0\ \ \mbox{in the sense of distributions}.
%\end{equation}
%To summarize, we have obtained a classical variational inequality formulation (\cite{O}, p. 159) for (\ref{cr1})
%Our earlier analysis indicates that if $\rho$ is continuous, then we have
%\begin{eqnarray}
%\rho&\geq& 0,\\
%\rho \left(\partial_t\rho+\rho^2\Delta^2\rho^3\right)&=&0.
%,\\
%\rho\partial_t\rho+\rho^3\Delta^3\rho^3&\leq& 0.
%\end{eqnarray}
In view of the a priori estimates we have been able to derive, we give the following definition for (\ref{cr1})-(\ref{cr4}).
%It is possible to derive a weak variational formulation for (\ref{cr1}), which would be what has been done in \cite{DFL}. However, in the context here, we do not seem to be able to gain any advantages by doing that.  All things considered, a more suitable notion of a weak solution for (\ref{cr1})-(\ref{cr4}) seems to be the following

\noindent {\bf Definition 1.2.} 
We say that a function $\rho$ is a weak solution to (\ref{cr1})-(\ref{cr4}) if, in addition to (D1), (\ref{cr2}), and (\ref{cr4}) , we have  
%\begin{equation}
%\partial_t\rho\rho+\left(\Delta\rho^3\right)^2-2\mbox{div}\left(\Delta\rho^3\nabla\rho^3\right)+\Delta\left(%\rho^3\Delta\rho^3\right)= 0\ \ \mbox{in $\mathcal{D}^\prime(\omt)$,}\label{ssa23}
%\end{equation}
%i.e.,
\begin{equation}\label{ssa23}
\int_{\Omega_T}\left(\partial_t\rho\rho+\left(\Delta\rho^3\right)^2\right)\xi dxdt+2\int_{\Omega_T}\Delta\rho^3\nabla\rho^3\nabla\xi dxdt+\int_{\Omega_T}\rho^3\Delta\rho^3\Delta\xi dxdt=0
\end{equation}
for all $\xi\in C^\infty\left(\mathbb{R}^N\times\mathbb{R}\right)$ with the property
$$\xi\mid_{\Sigma_T}=0.$$

It is easy to see that if $\rho $ is sufficiently smooth, the above equation implies (\ref{cr1}) and the boundary condition (\ref{cr3}). Since
$\rho\in W^{1,2}(\omt)$, (\ref{cr4}) makes sense. 
%The boundary condition (\ref{cr3}) is implied in (\ref{ssa25}). 
We would like to remark that the model derived in \cite{X} has been analyzed in \cite{DFL} via a variational inequality formulation. It is possible to cast (\ref{p1}) as the gradient flow of a suitably-defined functional in a Hilbert space. We will pursue this possibility in a future study. However, the preceding approaches do not seem to work for (\ref{cr1}). 
% giving a variational inequality formulation . analogous to the one given 

%If $\rho$ in Theorem 1.1 is not continuous, 
In general, we have the following result.
\begin{thm}The function $\rho$ obtained in Theorem 1.1 satisfies the inequality 
	%the following conditions hold:
	%We can derive from (\ref{sfa8}) that
	\begin{equation}\label{ssa22}
	\int_{\Omega_T}\left(\partial_t\rho\rho+\left(\Delta\rho^3\right)^2\right)\xi dxdt+2\int_{\Omega_T}\Delta\rho^3\nabla\rho^3\nabla\xi dxdt+\int_{\Omega_T}\rho^3\Delta\rho^3\Delta\xi dxdt\leq 0
	\end{equation}
	for all $\xi\in C^\infty\left(\mathbb{R}^N\times\mathbb{R}\right)$ with the properties
	$$\xi\geq 0,\ \ \ \xi\mid_{\Sigma_T}=0.$$
	%\begin{equation}
%\partial_t\rho\rho+\left(\Delta\rho^3\right)^2-2\mbox{div}\left(\Delta\rho^3\nabla\rho^3\right)+\Delta\left(%\rho^3\Delta\rho^3\right)\leq 0\ \ \mbox{in $\mathcal{D}^\prime(\omt)$.}\label{ssa22}
%	\end{equation}
	%Under (H2) and (H3) there is a weak solution to (\ref{cr1})-(\ref{cr4}) in the sense of Definition 1.2.
\end{thm}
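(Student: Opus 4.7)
The plan is to obtain (\ref{ssa22}) by passing to the liminf in an \emph{exact} identity satisfied by the approximate solutions $\rho_\tau$ constructed in the course of proving Theorem 1.1. At the approximate level $\rho_\tau$ is strictly positive and sufficiently regular, and respects the boundary condition $\Delta \rho_\tau^3 = 0$ on $\Sigma_T$, so the relation $\partial_t(1/\rho_\tau) = \Delta^2 \rho_\tau^3$ holds classically in $\Omega_T$. For any $\xi \in C^\infty(\mathbb{R}^N\times\mathbb{R})$ with $\xi \ge 0$ and $\xi|_{\Sigma_T} = 0$, I would multiply by $\rho_\tau^3 \xi$ and integrate over $\Omega_T$. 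On the left, $\partial_t(1/\rho_\tau)\,\rho_\tau^3 = -\rho_\tau \partial_t\rho_\tau$. On the right, two spatial integrations by parts produce no boundary contribution (the first because $\rho_\tau^3\xi$ vanishes on $\partial\Omega$, the second because $\Delta\rho_\tau^3$ does), yielding the exact identity
\begin{equation*}
\int_{\Omega_T}\!\bigl(\rho_\tau\partial_t\rho_\tau + (\Delta\rho_\tau^3)^2\bigr)\xi\,dx\,dt + 2\!\int_{\Omega_T}\!\Delta\rho_\tau^3\,\nabla\rho_\tau^3\cdot\nabla\xi\,dx\,dt + \int_{\Omega_T}\!\rho_\tau^3\,\Delta\rho_\tau^3\,\Delta\xi\,dx\,dt = 0.
\end{equation*}

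Next I would pass $\tau \to 0$, invoking the convergences established along the way to Theorem 1.1: $\rho_\tau \to \rho$ strongly in $L^2(\Omega_T)$, $\partial_t\rho_\tau \rightharpoonup \partial_t\rho$ and $\Delta\rho_\tau^3 \rightharpoonup \Delta\rho^3$ weakly in $L^2(\Omega_T)$, and, by an Aubin--Lions argument on $\{\rho_\tau^3\}$ based on (\ref{ssm22}) together with a uniform time-regularity bound on $\rho_\tau^3$ coming from (\ref{e32}), $\rho_\tau^3 \to \rho^3$ and $\nabla \rho_\tau^3 \to \nabla\rho^3$ strongly in $L^2(\Omega_T)$. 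Each of the three mixed integrals $\int \rho_\tau\partial_t\rho_\tau\xi$, $\int \Delta\rho_\tau^3\,\nabla\rho_\tau^3\!\cdot\!\nabla\xi$, and $\int \rho_\tau^3\,\Delta\rho_\tau^3\,\Delta\xi$ then converges to its obvious limit as the product of a weakly convergent factor and a strongly convergent one.

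The remaining term $\int (\Delta\rho_\tau^3)^2 \xi$ is the one place where the hypothesis $\xi \ge 0$ is essential: the map $v\mapsto \int_{\Omega_T} v^2\xi\,dx\,dt$ is convex and continuous on $L^2(\Omega_T)$, hence weakly lower semicontinuous, so
\begin{equation*}
\int_{\Omega_T}(\Delta\rho^3)^2\,\xi\,dx\,dt \;\le\; \liminf_{\tau\to 0}\int_{\Omega_T}(\Delta\rho_\tau^3)^2\,\xi\,dx\,dt.
\end{equation*}
Taking the liminf in the discrete identity therefore produces exactly (\ref{ssa22}); the passage from equality to inequality is the precise manifestation of a possible loss of strong compactness of $\Delta\rho_\tau^3$ in $L^2(\Omega_T)$.

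The main technical obstacle is the strong convergence $\nabla\rho_\tau^3 \to \nabla\rho^3$ in $L^2(\Omega_T)$, which powers the weak-strong pairing in both cross terms. It requires a uniform time-regularity estimate for $\rho_\tau^3$ — a bound on $\partial_t\rho_\tau^3 = 3\rho_\tau^2\,\partial_t\rho_\tau$ in $L^2(0,T; L^q(\Omega))$ for some $q > 1$ — which follows from the $L^2$ bound on $\partial_t\rho_\tau$ given by (\ref{e32}) combined with a Sobolev-embedding control on $\rho_\tau^2$ inherited from the uniform $L^\infty(0,T; W^{2,2}(\Omega))$ bound on $\rho_\tau^3$ recorded in (\ref{ssm22}). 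Once this compactness has been secured inside the construction of Theorem 1.1, everything else — the test-function identity and the identification of weak limits — is essentially routine.
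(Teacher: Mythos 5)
Your overall strategy---derive a test-function identity at the approximate level, then pass to the limit using weak lower semicontinuity of $v\mapsto\int v^2\xi$ for the quadratic term and weak--strong pairing for the cross terms---is the same as the paper's, and the final limit passage you describe is essentially what is done (the strong $L^2$ convergence of $\nabla\rbj^3$ that you flag as the main obstacle is already secured in Proposition 3.5(R2), by a direct integration-by-parts argument $\int|\nabla(\rbj^3-\overline{\rho}_i^3)|^2=\int(\Delta\rbj^3-\Delta\overline{\rho}_i^3)(\rbj^3-\overline{\rho}_i^3)$ rather than by Aubin--Lions, which neatly sidesteps the fact that $\rbj^3$ is piecewise constant in $t$ and so has no time derivative to estimate).

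The genuine gap is your starting point. The approximate solutions constructed for Theorem 1.1 do \emph{not} satisfy $\partial_t(1/\rho_\tau)=\Delta^2\rho_\tau^3$ classically with $\Delta\rho_\tau^3=0$ on $\Sigma_T$: the scheme is discrete in time and carries the penalization $\tau\psi_k$, so the relevant relation is (\ref{sfa1}), the boundary value of $\Delta e^{3\psi_k}$ is $-\tau\ln\boj$ by (\ref{sfa2}) rather than $0$, and there is an extra term $\tau\Delta\psi_k$. Consequently the ``exact identity'' you integrate by parts from is simply not available, and three things you treat as free must actually be proved: (i) the discrete difference quotient paired with $\esk$ does not equal $-\partial_t\rtj\,\rbj$; one only has the one-sided convexity estimate $(e^{-\psi_k}-e^{-\psi_{k-1}})e^{2\psi_k}\leq-(e^{\psi_k}-e^{\psi_{k-1}})$ of (\ref{sfa3}), which is a second, independent source of the inequality in (\ref{ssa22}) besides weak lower semicontinuity (you attribute the inequality to the latter alone); (ii) the nonzero boundary datum for $\Delta\esk$ produces boundary contributions in the double integration by parts, which must be absorbed into the $O(\tau)$ terms as in (\ref{sfa4}); and (iii) the remainder $\tau\int\nabla\overline{\psi}_j\nabla(\esbj\xi)$ is not obviously small---it is controlled only because Proposition 3.1 supplies the dissipation bound $\tau\int_{\Omega_T}\esbj|\nabla\overline{\psi}_j|^2\leq c$, which yields an $O(\sqrt{\tau})$ error in (\ref{sfa7}). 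These are not routine bookkeeping items; they are where the proof actually lives, and without them the argument does not get off the ground for the approximations the theorem is about.
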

%Thus we seek to further weaken the notion of a solution.the sense of distributions
It would be interesting to know  if the function $\rho$ in Theorem 1.1 satisfies the equality in (\ref{ssa22}) in general. If it did,  it would be a weak solution to (\ref{cr1})-(\ref{cr4}) in the sense of Definition 1.2. Unfortunately, we have not been able to prove it under the assumptions of Theorem 1.1.
%However, our proof in Section 3 does not seem to suggest that the equality in (\ref{ssa22}) could hold.
%\noindent {\bf Definition 1.2.} 
%We say that a function $\rho$ is a weak solution to (\ref{cr1})-(\ref{cr4}) if $\rho$ satisfies (D1), %(\ref{cr2}), and 

%In a way, this result is analogous to the variational inequality in \cite{DFL}. 
%in the sense of distributions.
%Physically, the singularities represent rupture defects and pinning on the surface evolution
%	These defects are always point downward and almost stationary in time
%due to the asymmetric in the exponential curvature dependent mobility.
%The pinning point ruptures in the epitaxial growth models
% distinctive feature 
%was carefully studied numerically in \cite{MW}.

Fourth-order nonlinear parabolic equations arise in a variety of physical settings.
% (\cite{DFL}, \cite{EG}, \cite{G}, \cite{LX}). 
Two well-known examples are the thin film equation \cite{BF} and the quantum drift-diffusion model \cite{J2}.
A well-known difficulty in the study of  fourth-order equations is that the maximum principle is no longer valid for such equations. As a result, the techniques one often uses in the analysis of second-order equations are mostly not applicable.
From the point of view of mathematical analysis, it is a little bit surprising that (\ref{cr1})-(\ref{cr4}) can have a non-negative solution. There is really not much in the structure of the equation (\ref{cr1}) that indicates the existence of such a solution. One can easily see that if $\rho$ is a solution, so is $-\rho$.
Keep in mind that the fundamental solution for the biharmonic heat equation changes signs, and sign change
is an intrinsic property of solutions to linear fourth-order parabolic equations. That is to say, we have to rely on the nonlinear structure of our equation for possible existence of a non-negative solution. In this regard, the nonlinearity in (\ref{cr1}) does not seem to offer much help.
%This is on top of the non-applicability of the maximum principle.
 This probably explains why we have not been able to construct a sequence of positive approximate solutions to (\ref{cr1})-(\ref{cr4}) directly in high space dimensions. (It can be done in the one-dimensional case. See \cite{GLL2} for details.) What saves us is the discovery of the connection between (\ref{p1}) and (\ref{cr1}), and we can study the latter via the former.
%Yet, we are still able to obtain a positive solution. 

%To gain some insights into our problem, we proceed to perform some formal analysis. That is , we assume %that $u$ is a classical solution of (\ref{p1}) in our subsequent calculations. 

		A solution to (\ref{p1})-(\ref{p4}) will be constructed as the limit of a sequence of approximate solutions. The key is to design an approximation scheme so that all the calculations in the derivation of (\ref{e15}), (\ref{e24}), and (\ref{e32}) can be justified. This is accomplished in Sections 2 and 3. To be more specific,
		%  This paper is organized as follows. 
		in Section 2 we state a couple of preparatory lemmas and present our approximate 
		problem. The trick here is the introduction of a suitable substitution. 
		This substitution takes care of the sign issue for $\Delta u$ in (\ref{p1})-(\ref{p4}), and  we also obtain the existence of a classical solution to the approximate problem. We form a sequence of approximate solutions based upon implicit discretization in the time variable. Section 3 is devoted to the proof that the estimates (\ref{e15}), (\ref{e24}), (\ref{e32}), and (\ref{ssm22}) are all preserved for the sequence except the second one in (\ref{ssm22}), and this is enough to justify passing to the limit. 
		%We would like to remark that we have not been able to recover the one-dimensional results in %\cite{GLL2} mainly because our approximation scheme does not yield the second estimate in %(\ref{ssm22}). 
	\section{Approximate Problems}
	\setcounter{equation}{0}
	Before we present our approximate problem,  we collect a few frequently used elementary inequalities in the following lemma.
	\begin{lem} we have:
		% , $s,t\in \mathbb{R}$, and $\ a, b\in \mathbb{R}^+$, we have:
		\begin{enumerate}
			\item[(L1)] $x\cdot(x-y)\geq \frac{1}{2}(|x|^2-|y|^2)$ for $x,y\in \mathbb{R}^N$;
			\item[(L2)] if $f$ is an increasing (decreasing) function on $\mathbb{R}$ and $F$ an anti-derivative of $f$, then
			$$f(s)(s-t)\geq (\leq)F(s)-F(t)
			\ \ \mbox{ for all $s,t\in \mathbb{R}$};
			$$
		%	In particular, there hold the inequalities
			%we have
		%	and therefore
		%	\begin{equation}
		\item[(L3)]	$(e^{-s}-e^{-t})e^{3s}\leq -\frac{1}{2}(e^{2s}-e^{2t})$ for all $s,t\in \mathbb{R}$;
		%	e^s(s-t)&\geq& e^s-e^t, \  \ \ \mbox{and}\label{ota1}\\
		%	s(e^s-e^t)&\geq& e^ss-e^tt-(s-t) ;
			%\ \ \mbox{ for all $s,t\in \mathbb{R}$.}
			%\label{om1}
		%	\end{equation}
			\item[(L4)] 
			%for  we have
			%There holds
		%	\begin{equation}
			$(a^3-b^3)(a^{-1}-b^{-1})\leq -3(a-b)^2$, where $a, b\in (0,\infty)$.
		%	(e^s-e^t)(s-t)\geq 2\left(e^{\frac{s}{2}}-e^{\frac{t}{2}}\right)^2.
			%\ \mbox{ for all $s,t\in \mathbb{R}$.}
		%	\label{om2}
		%	\end{equation}
		%	\item[(6)] there hold
			%Let $a, b\in \mathbb{R}^+$. Then we have
		%	\begin{eqnarray*}
		%	(a+b)^\alpha&\leq&a^\alpha+b^\alpha\ \ \mbox{if $0<\alpha\leq 1$},\\
		%	(a+b)^\alpha&\leq&2^{\alpha-1}(a^\alpha+b^\alpha)\ \ \mbox{if $\alpha>1$},\\
		%	ab&\leq& \varepsilon a^p+\frac{1}{\varepsilon^{q/p}}b^q \ 
		%	\mbox{if $\varepsilon>0,\, p, \, q>1$ with $\frac{1}{p}+\frac{1}{q}=1$}.
		%	\end{eqnarray*}
		\end{enumerate}
	\end{lem}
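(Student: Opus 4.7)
The plan is to verify each of the four inequalities by a direct algebraic or one-line calculus computation; no deep machinery is needed, and the only work is locating the right factorization in each case.

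For (L1), I would simply expand both sides and recognize the difference as a square: $x\cdot(x-y) - \tfrac{1}{2}(|x|^2-|y|^2) = \tfrac{1}{2}|x-y|^2 \geq 0$. For (L2), I would argue by integration. Assuming first $s \geq t$, write $F(s)-F(t) = \int_t^s f(r)\,dr$, and use $f(r) \leq f(s)$ on $[t,s]$ (from the monotonicity of $f$) to bound the integral by $f(s)(s-t)$. The case $s < t$ gives $F(t)-F(s) = \int_s^t f(r)\,dr \geq f(s)(t-s)$, which is the same inequality after multiplying by $-1$. The decreasing version is the identical argument with the inner inequality reversed.

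The two substantive inequalities are (L3) and (L4). For (L3), I would substitute $a = e^s$, $b = e^t$ and clear denominators, reducing the claim to the polynomial inequality $2a^3 - 3a^2 b + b^3 \geq 0$ for $a,b>0$. The key algebraic identity is
\begin{equation*}
2a^3 - 3a^2 b + b^3 = (a-b)^2(2a+b),
\end{equation*}
which is manifestly non-negative for positive $a,b$; this step is the only point that requires a moment of thought. For (L4), I would factor $a^3 - b^3 = (a-b)(a^2+ab+b^2)$ and $a^{-1} - b^{-1} = (b-a)/(ab)$, so that the left-hand side becomes $-(a-b)^2(a^2+ab+b^2)/(ab)$, and the claim reduces to $a^2 + ab + b^2 \geq 3ab$, i.e., $(a-b)^2 \geq 0$.

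Because all four statements are elementary, there is no genuine obstacle; the only item that is not entirely mechanical is discovering the cubic factorization underlying (L3). Nevertheless, it is worth recording these inequalities carefully here, since each one corresponds to a specific monotonicity or dissipation step that will be invoked repeatedly when the a priori estimates of Section 1.2 are reproduced at the discrete level in the approximation scheme.
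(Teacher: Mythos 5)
Your proof is correct in all four parts, and for (L1), (L2), and (L4) it matches the paper: the paper only writes out (L3) and (L4), dismissing the first two as obvious, and its (L4) computation is exactly yours --- factor $a^3-b^3=(a-b)(a^2+ab+b^2)$ and $a^{-1}-b^{-1}=(b-a)/(ab)$, then use $a^2+ab+b^2\geq 3ab$. The one place you genuinely diverge is (L3). The paper does not clear denominators or factor a cubic; it sets $s_1=e^{-s}$, $s_2=e^{-t}$, rewrites the left side as $(s_1-s_2)s_1^{-3}$, and invokes (L2) for the decreasing function $r\mapsto r^{-3}$ on $(0,\infty)$ to get
\begin{equation*}
(s_1-s_2)s_1^{-3}\leq \int_{s_2}^{s_1} r^{-3}\,dr=-\tfrac12\bigl(s_1^{-2}-s_2^{-2}\bigr)=-\tfrac12\bigl(e^{2s}-e^{2t}\bigr).
\end{equation*}
Your route instead substitutes $a=e^{s}$, $b=e^{t}$ and reduces the claim to the identity $2a^3-3a^2b+b^3=(a-b)^2(2a+b)$, which is a valid and entirely self-contained alternative. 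The trade-off is that the paper's derivation makes (L3) an instance of the same monotonicity principle (L2) that recurs throughout the discrete estimates --- for example the inequality $(e^{-\psi_k}-e^{-\psi_{k-1}})e^{2\psi_k}\leq -(e^{\psi_k}-e^{\psi_{k-1}})$ in the proof of Theorem 1.2 is the same trick with exponent $-2$ --- and it explains where the constant $\tfrac12$ comes from (the antiderivative $-\tfrac12 r^{-2}$ of $r^{-3}$), whereas your cubic factorization must be rediscovered for each new exponent. Either way the lemma is fully established.
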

	Obviously, only (L3) and (L4) deserve some attention. 
	To establish (L3), we set
	$$s_1=e^{-s},\ \  s_2=e^{-t}.
	$$
Then we compute from (L2) that
	\begin{eqnarray*}
	(e^{-s}-e^{-t})e^{3s}&=&(s_1-s_2)s_1^{-3}\\
	&\leq&\int_{s_2}^{s_1}s^{-3}ds=-\frac{1}{2}(s_1^{-2}- s_2^{-2})\\
	&=&-\frac{1}{2}(e^{2s}-e^{2t}).
	\end{eqnarray*}
	The proof of (L4) is also rather elementary. For $a, b\in (0,\infty)$ we compute
	\begin{eqnarray*}
	(a^3-b^3)(a^{-1}-b^{-1})&=& (a-b)(a^2+ab+b^2)\frac{b-a}{ab}\\
	&=&-(a-b)^2\left(\frac{a}{b}+\frac{b}{a}+1\right)\\
	&\leq&-3(a-b)^2.
	\end{eqnarray*}
	The one-dimensional existence theorem is based upon the following lemma, which is inspired by (\cite{E}, p. 288) and \cite{GLL2}.
	\begin{lem}
		If $\omt=(0,1)\times(0,T)$ and $f\in W^{1,2}(\omt)\cap L^\infty(0,T; W^{2,2}(\Omega))$ with the boundary conditions
		\begin{equation}
		f(0,t)=a,\ \  f(1,t)=b, \ a, b\in \mathbb{R},
		\end{equation}
		then there is a positive number $c=c(\|\partial_t f\|_2, \|f\|_{ L^\infty(0,T; W^{2,2}(\Omega))})$ such
		that
		\begin{equation}
		|f(x_1, t_1)-f(x_2, t_2)|\leq c\left(|x_1-x_2|^{\frac{1}{2}}+|t_1-t_2|^{\frac{1}{4}}\right)\ \ \mbox{for all $(x_1,t_1), (x_2,t_2) \in \omt$}.
		\end{equation}
		Here and in what follows $\|\cdot\|_p$ denotes the norm in the space $L^p(\omt)$.
	\end{lem}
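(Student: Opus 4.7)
The plan is to establish the spatial and temporal Hölder bounds separately and then combine them via the triangle inequality. The spatial half-exponent is immediate from the one-dimensional Sobolev embedding: since $f(\cdot,t)\in W^{2,2}(0,1)\subset W^{1,2}(0,1)\hookrightarrow C^{1/2}([0,1])$ for a.e.\ $t$, we have $|f(x_1,t)-f(x_2,t)|\leq c|x_1-x_2|^{1/2}$ with constant controlled by $\|f\|_{L^\infty(0,T;W^{2,2}(\Omega))}$. The temporal quarter-exponent is the substantive part and follows by the classical interpolation-by-spatial-averaging argument, which trades the weaker time regularity (only $W^{1,2}$ in $t$) against the stronger space regularity (Hölder $1/2$) at an optimized length scale.

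More precisely, fix $(x,t_1),(x,t_2)\in\omt$, set $\tau:=|t_2-t_1|$, and choose $h\in(0,1)$ to be optimized (replacing the interval $[x,x+h]$ below by $[x-h,x]$ if $x+h>1$). Consider the spatial average
\[
A_h(t) := \frac{1}{h}\int_x^{x+h} f(y,t)\,dy.
\]
For each $i=1,2$, write $f(x,t_i)-A_h(t_i)=h^{-1}\int_x^{x+h}(f(x,t_i)-f(y,t_i))\,dy$ and apply the spatial Hölder bound above, yielding $|f(x,t_i)-A_h(t_i)|\leq c\,h^{1/2}$. Next,
\[
A_h(t_2)-A_h(t_1) \;=\; \frac{1}{h}\int_x^{x+h}\!\int_{t_1}^{t_2}\partial_t f(y,s)\,ds\,dy,
\]
and by Cauchy--Schwarz this is bounded by $h^{-1}(h\tau)^{1/2}\|\partial_t f\|_{2}=c\,\tau^{1/2}h^{-1/2}$. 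Combining through the triangle inequality gives $|f(x,t_2)-f(x,t_1)|\leq c(h^{1/2}+\tau^{1/2}h^{-1/2})$; optimizing with $h=\tau^{1/2}$ (when $\tau\leq 1$; when $\tau>1$ the bound is trivial) yields the time Hölder estimate $|f(x,t_2)-f(x,t_1)|\leq c\,\tau^{1/4}$. The full joint estimate then follows by inserting the intermediate point $(x_2,t_1)$ and summing the space and time bounds.

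The main subtlety is the boundary case when $x$ lies within distance $h$ of $0$ or $1$. Here the hypotheses $f(0,t)\equiv a$ and $f(1,t)\equiv b$ are useful because the boundary traces carry no time oscillation; one simply shifts the averaging interval to lie inside $(0,1)$ (e.g.\ use $[x-h,x]$ when $x>1/2$), and the two bounds above go through unchanged. The argument is intrinsically one-dimensional, since the embedding $W^{1,2}\hookrightarrow C^{1/2}$ fails in higher dimensions; this is the sole obstacle to extending Lemma 2.2 beyond $\Omega=(0,1)$, and it is precisely why the continuity of $\rho$ in later sections can be ensured only in the one-dimensional case.
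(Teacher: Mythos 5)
Your proof is correct, but it follows a genuinely different route from the paper's. The paper writes $f(x_i,t_i)=a+\int_0^{x_i}\partial_sf(s,t_i)\,ds$, reduces the whole estimate to $\bigl(\int_0^1|\partial_sf(s,t_1)-\partial_sf(s,t_2)|^2ds\bigr)^{1/2}$, and then integrates by parts against $\partial_{ss}f(\cdot,t_1)-\partial_{ss}f(\cdot,t_2)$ --- here the Dirichlet conditions are essential to kill the boundary terms, and the $L^\infty(0,T;W^{2,2})$ bound controls the second derivatives --- before finally inserting $f(s,t_1)-f(s,t_2)=\int_{t_2}^{t_1}\partial_tf\,dt$ and Cauchy--Schwarz to harvest the $|t_1-t_2|^{1/4}$. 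You instead get the spatial $1/2$-H\"older modulus from the one-dimensional embedding $W^{1,2}(0,1)\hookrightarrow C^{0,1/2}$ and the temporal $1/4$-modulus from the classical spatial-averaging interpolation with window $h=|t_1-t_2|^{1/2}$. Your version buys something: it never actually uses the constant boundary values nor the second spatial derivatives (only $L^\infty(0,T;W^{1,2}(\Omega))$ and $\partial_tf\in L^2$), so it proves a slightly more general statement; your closing remark that the Dirichlet data are ``useful'' in the boundary case is therefore a harmless misattribution --- shifting the averaging window suffices on its own, and for $h>1/2$ the trivial $L^\infty$ bound covers the remaining case, as you note for $\tau>1$. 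The paper's argument, by contrast, is shorter under the stated hypotheses and exploits exactly the structure (boundary conditions plus $W^{2,2}$ control) that the approximate solutions of Section 3 are known to possess. Both yield the same exponents and the same dependence of the constant on $\|\partial_tf\|_2$ and $\|f\|_{L^\infty(0,T;W^{2,2}(\Omega))}$.
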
	
	\begin{proof} Let $(x_1,t_1), (x_2,t_2) \in \omt$ be given.
		We calculate from the first boundary condition that
		\begin{eqnarray}
		|f(x_1, t_1)-f(x_2, t_2)|&=&\left|\int_{0}^{x_1}\partial_sf(s,t_1)ds-\int_{0}^{x_2}\partial_sf(s,t_2)ds\right|\nonumber\\
		&\leq& \int_{0}^{1}\left|\partial_sf(s,t_1)-\partial_sf(s,t_2)\right|ds+\left|\int_{x_1}^{x_2}|\partial_sf(s,t_2)|ds\right|\nonumber\\
		&\leq&\left(\int_{0}^{1}\left|\partial_sf(s,t_1)-\partial_sf(s,t_2)\right|^2ds\right)^{\frac{1}{2}}+c|x_1-x_2|^{\frac{1}{2}}.\label{sm111}
		\end{eqnarray}
		Note that our boundary conditions imply 
		$f(s,t_1)-f(s,t_2)=0$ for $s=0$ and $1$. Keeping this in mind, we apply the integration-by-parts formula to obtain
		\begin{eqnarray}
		\lefteqn{\int_{0}^{1}\left|\partial_sf(s,t_1)-\partial_sf(s,t_2)\right|^2ds}\nonumber\\
		&=&-	\int_{0}^{1}(\partial^2_{ss}f(s,t_1)-\partial^2_{ss}f(s,t_2))(f(s,t_1)-f(s,t_2))ds\nonumber\\
		&\leq&c\left(\int_{0}^{1}\left|f(s,t_1)-f(s,t_2)\right|^2ds\right)^{\frac{1}{2}}\nonumber\\
		&\leq&c\left(\int_{0}^{1}\left|\int_{t_2}^{t_1}\partial_t f(s,t)dt\right|^2ds\right)^{\frac{1}{2}}\nonumber\\
		&\leq& c|t_1-t_2|^{\frac{1}{2}}.
		\end{eqnarray}
		Plugging this into (\ref{sm111}) yields the desired result.
		\end{proof}	
	To design our approximate problem, we introduce a new unknown function $\psi$ so that
\begin{equation}
\Delta u=\ens.
\end{equation}
Obviously, this will force $\Delta u$ to be non-negative. Of course, there are trade-offs from doing this,
%doing so will result in other difficulties
 and we will see them later. Now (\ref{p1}) becomes
\begin{equation}
\partial_tu=\Delta\ets.
\end{equation}
We employ an implicit discretization scheme in the time variable for this equation. This leads
%Discretizing the time derivative in this equation, we are led 
to the consideration of
%This suggests that we consider 
the following system of two second-order elliptic equations
\begin{eqnarray}
-\Delta e^{3\psi}+\tau\psi &=&- \uvt\ \ \ \mbox{in $\Omega$},\label{ap1}\\
-\Delta u &=&-\ens\ \ \ \mbox{in $\Omega$}\label{ap2}
\end{eqnarray}
coupled with the boundary conditions
\begin{eqnarray}
 u&=& b_0(x)\ \ \ \mbox{on $\partial\Omega$},\label{ap3}\\
\psi&=&-\ln\bo\ \ \ \mbox{on $\partial\Omega$},\label{ap4}
\end{eqnarray}
where $\tau>0$ is the step size and $v$ is either $u_0$ or the solution obtained in a preceding step in the scheme.
%The introduction of t
The term $\tau\psi$ in (\ref{ap1}) has a regularizing effect, and, as we shall see, it helps to overcome the 
possible degeneracy caused by the exponential nonlinearity in the equation. Thus it is essential to  
%to seems be necessary for
 the existence of a solution to (\ref{ap1})-(\ref{ap4}). 
 %We will have more remarks on this later. 
 In addition to the non-homogeneous Dirichlet boundary conditions considered here, another key difference between our approximate problem here and the one in \cite{LX} is due to the exponential term on the right-hand side of (\ref{ap2}). It turns out that this term renders many calculations in \cite{LX} invalid here. Thus we must find a new way of deriving a priori estimates.
	 %has played a key role in the existence theorem. 
The construction of our approximate solutions will be based upon this problem.

	\begin{prop}
		%Let (\ref{ot3}) hold, and assume that
		Let $\Omega$ be a bounded domain in $\mathbb{R}^N$ with Lipschitz boundary. Assume:
		% and let (H1)  hold. Assume: 
		\begin{enumerate}
			\item[(H4)]  $b_0(x), \bo\in W^{1,2}(\Omega)\cap \hc$, where $\hc$ denotes the space of all H\"{o}lder continuous functions on $\overline{\Omega}$, i.e., $u\in\hc$ if and only if $u\in C^{0,\alpha}(\overline{\Omega})$ for some $\alpha\in (0,1)$, and there is a positive number $c_0$ with the property
			\begin{equation}
			\bo\geq c_0\ \ \mbox{ on $\Omega$;}
			\end{equation}
			\item[(H5)] $v\in L^\infty(\Omega)$.
		\end{enumerate} Then there is a 
		weak solution to (\ref{ap1})-(\ref{ap4}). If, in addition, $\partial\Omega$ is $C^2$, $v\in\hc, b_0(x) , \bo\in C^{2,\alpha}(\overline{\Omega})$ for some $\alpha\in (0,1)$, then the solution is also classical.
	\end{prop}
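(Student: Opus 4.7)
The plan is to produce a weak solution via Schaefer's fixed-point theorem applied to an iteration map that decouples the system. Given $\bar\psi\in L^\infty(\Omega)$, I would define $T(\bar\psi):=\psi$ by two steps: (i) solve the linear problem $-\Delta u=-e^{-\bar\psi}$ in $\Omega$ with $u=b_0$ on $\partial\Omega$, which by standard elliptic theory yields $u\in W^{2,p}(\Omega)$ for every $p<\infty$, hence $u\in C^{0,\beta}(\overline{\Omega})$ via the De Giorgi--Nash estimate; (ii) solve the nonlinear problem
\begin{equation*}
-\Delta e^{3\psi}+\tau\psi=-\frac{u-v}{\tau}\ \text{in }\Omega,\qquad \psi=-\ln b_1\ \text{on }\partial\Omega.
\end{equation*}
A fixed point of $T$ is then a weak solution of (\ref{ap1})--(\ref{ap4}); existence thus reduces, via Schaefer's theorem, to showing that $T$ is continuous and compact on $L^\infty(\Omega)$ and that the set $\{\psi:\psi=\sigma T(\psi)\text{ for some }\sigma\in[0,1]\}$ is bounded.

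Step (ii) is the main technical point. Under the substitution $w=e^{3\psi}$ it transforms into the semilinear Dirichlet problem $-\Delta w+\tfrac{\tau}{3}\ln w=g$ in $\Omega$ with $w=b_1^{-3}$ on $\partial\Omega$, where $g:=-(u-v)/\tau\in L^\infty(\Omega)$. This is the Euler--Lagrange equation of the strictly convex, coercive functional
\begin{equation*}
J(w)=\int_\Omega\Bigl[\tfrac12|\nabla w|^2+\tfrac{\tau}{3}(w\ln w-w)-gw\Bigr]\,dx
\end{equation*}
on $\{w\in H^1(\Omega):w=b_1^{-3}\text{ on }\partial\Omega\}$, with $w\ln w-w$ extended by $+\infty$ for $w<0$. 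Existence and uniqueness of a minimizer follow from the direct method; to derive the Euler--Lagrange equation rigorously, I would regularize by replacing $\ln w$ with $\ln(w+\varepsilon)$, solve the non-degenerate problem, and let $\varepsilon\downarrow 0$. A Stampacchia-type comparison, testing the equation against $(m-w)^+$ and $(w-M)^+$ with $m=\min(e^{-3\|g\|_\infty/\tau},\|b_1\|_\infty^{-3})$ and $M=\max(e^{3\|g\|_\infty/\tau},c_0^{-3})$, yields the two-sided bound $0<m\le w\le M$, so $\psi=\tfrac13\ln w\in L^\infty(\Omega)$ with a bound depending only on $\|g\|_\infty$, $\|b_1\|_\infty$, $c_0$, and $\tau$. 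De Giorgi--Nash H\"older estimates for both the $u$-equation and the $w$-equation then give the compactness and continuity of $T$.

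For the a priori bound required by Schaefer, unfolding $\psi=\sigma T(\psi)$ via $\phi:=\psi/\sigma$ yields the coupled system $-\Delta u=-e^{-\sigma\phi}$ with $u=b_0$ on $\partial\Omega$, and $-\Delta e^{3\phi}+\tau\phi=-(u-v)/\tau$ with $\phi=-\ln b_1$ on $\partial\Omega$. Since $\Delta u=e^{-\sigma\phi}\ge 0$, $u$ is subharmonic, so $u\le\|b_0\|_\infty$ regardless of $\sigma$; inserting this one-sided bound into the comparison argument for the $\phi$-equation at an interior minimum yields a uniform lower bound $\phi\ge -L_1$ independent of $\sigma$, with $L_1$ depending only on $\|v\|_\infty$, $\|b_0\|_\infty$, $\|b_1\|_\infty$, $c_0$, and $\tau$. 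This controls $\|e^{-\sigma\phi}\|_\infty\le e^{L_1}$, so the Green-function representation of $u$ gives a matching uniform lower bound on $u$; feeding this back into the comparison argument at an interior maximum of $\phi$ gives a uniform upper bound $\phi\le U_1$. Hence $\|\psi\|_\infty=\sigma\|\phi\|_\infty\le R_0$ uniformly in $\sigma\in[0,1]$, and Schaefer's theorem supplies a fixed point.

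For the classical part under the stronger hypotheses, I would bootstrap. The $L^\infty$ bounds and the positivity $w\ge m>0$ make $\ln w\in L^\infty$, so $-\Delta w=g-\tfrac{\tau}{3}\ln w\in L^\infty$; with $b_1^{-3}\in C^{2,\alpha}(\overline{\Omega})$ and $\partial\Omega\in C^2$, $W^{2,p}$ theory gives $w\in C^{1,\gamma}(\overline{\Omega})$, and likewise $u\in C^{1,\gamma}(\overline{\Omega})$ from the $u$-equation with $b_0\in C^{2,\alpha}$. Since $v\in C^{0,\alpha}$, $g\in C^{0,\alpha}$, so Schauder estimates applied to the $w$-equation yield $w\in C^{2,\alpha}$, hence $\psi=\tfrac13\ln w\in C^{2,\alpha}$; a final Schauder step for $-\Delta u=-e^{-\psi}$ then gives $u\in C^{2,\alpha}(\overline{\Omega})$. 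The main obstacle throughout is preventing the two-sided degeneracy of $e^{3\psi}$ (both $e^{3\psi}\to 0$ and $e^{3\psi}\to\infty$) in the fixed-point iteration and in the $\varepsilon\downarrow 0$ limit; this is precisely the role of the regularizer $\tau\psi$ in (\ref{ap1}), as it is what makes the maximum-principle comparison yield a two-sided bound depending only on the data.
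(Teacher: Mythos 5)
Your proposal is correct in outline, and it shares the overall architecture of the paper's proof: decouple the system into a $u$-step and a $\psi$-step, run a Leray--Schauder/Schaefer homotopy, and obtain the crucial uniform $L^\infty$ bound by the chain ``$\Delta u=e^{-\psi}\ge 0$ implies $u\le\|b_0\|_\infty$, hence a lower bound $\psi\ge -M$ by a comparison exploiting the $\tau\psi$ term, hence $\|u\|_\infty\le c$, hence an upper bound $\psi\le L$.'' Where you genuinely diverge is in the $\psi$-subproblem. The paper keeps both subproblems \emph{linear}: it writes $\Delta e^{3\psi}=\mathrm{div}(3e^{3\psi}\nabla\psi)$ and freezes the coefficient as $3e^{3g}$, so the fixed-point map is a composition of two linear solves and only the classical linear theory (existence, De Giorgi--Nash, the $L^\infty$ estimate of Gilbarg--Trudinger) is invoked; the price is that the upper bound for $\psi$ cannot come from the classical $L^\infty$ estimate (the ellipticity constant depends on $\sup\psi$) and must be extracted by the $(\psi-L)^+$ test-function argument. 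You instead solve the $\psi$-equation as a genuinely nonlinear but convex problem via $w=e^{3\psi}$ and the functional $\int\bigl[\tfrac12|\nabla w|^2+\tfrac{\tau}{3}(w\ln w-w)-gw\bigr]dx$. This buys you uniqueness of the subproblem, a semilinear equation with unit principal part (so the two-sided Stampacchia bound is clean and the coefficient-degeneracy issue disappears), and a transparent explanation of the role of $\tau\psi$ as the term whose primitive $w\ln w$ pins $w$ away from $0$ and $\infty$; the price is the extra work of justifying the Euler--Lagrange equation for an integrand singular at $w=0$ (your $\ln(w+\varepsilon)$ regularization, plus the continuity of the resulting nonlinear solution map, which you should verify explicitly before invoking Schaefer). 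Both routes are viable; yours is self-contained modulo those details, while the paper's is shorter because it leans entirely on off-the-shelf linear results.
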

	\begin{proof}
	The existence assertion will be established via the Leray-Schauder Theorem
	(\cite{GT}, p. 280). For this purpose, we define an operator $B$ from $L^\infty(\Omega)$
	into itself as follows: for each $g\in L^\infty(\Omega)$ we say $B(g)=\psi$ if $\psi$ solves the  boundary value problem
	\begin{eqnarray}
	-\mbox{div}\left(3e^{3g}\nabla\psi\right)+\tau\psi &=&-\frac{u-v}{\tau} \ \ \ \mbox{in $\Omega$},\label{om3}\\
	\psi&=&-\ln\bo\ \ \ \mbox{on $\partial\Omega$},\label{om4}
		\end{eqnarray}
		where $u$ satisfies the linear problem
		\begin{eqnarray}
		-\Delta u &=&-e^{-g}\ \ \ \mbox{in $\Omega$},\label{om5}\\
		 u&=& b_0(x)\ \ \ \mbox{on $\partial\Omega$}.\label{om6}
		\end{eqnarray}
To see that $B$ is well-defined, we observe from the classical existence and regularity theory for linear elliptic equations (\cite{GT}, Chap. 8) that (\ref{om5})-(\ref{om6}) has a unique solution $u$ in the space $W^{1,2}(\Omega)\cap \hc$.
%C^\beta(\overline{\Omega})$ for some $\beta\in(0, 1)$. 
Since $g\in L^\infty(\Omega)$ the elliptic coefficient
in (\ref{om3}) is bounded above and bounded away from $0$ below. Thus we can conclude that (\ref{om3})-(\ref{om4}) also has a unique solution $\psi$ in the space $W^{1,2}(\Omega)\cap \hc$. %Furthermore, $\psi$ is also H\"{o}lder continuous on $\overline{\Omega}$.
%the Carderon-Zygmund inequality that that since $g\in L^\infty(\Omega)$ (\ref{om3})-(\ref{om4}) is %$uniformly elliptic. According to the classical regularity theory for linear elliptic equations, we can %conclude that the two preceding problems both have a unique solution in the space $C^\beta(\overline{\Omega})$
%for some $\beta\in (0,1)$ (\cite{GT}, Chap. 8). 
In view of these results, we can claim that $B$ is well-defined, continuous, and maps bounded sets into precompact ones. It remains to show that there is a positive number $c$ such that
\begin{equation}
\|\psi\|_\infty\leq c\label{ot8}
\end{equation}
for all $\psi\in L^\infty(\Omega)$ and $\sigma\in (0,1]$ satisfying
$$\psi=\sigma B(\psi).$$
This equation is equivalent to the boundary value problem
\begin{eqnarray}
	-\mbox{div}\left(3e^{3\psi}\nabla\psi\right)+\tau\psi  &=&-\sigma\frac{u-v}{\tau} \ \ \ \mbox{in $\Omega$},\label{ot9}\\
-\Delta u &=&-\ens \ \ \ \mbox{in $\Omega$},\label{ot10}\\
 u&=& b_0(x)\ \ \ \mbox{on $\partial\Omega$},\\
 \psi&=& -\sigma\ln\bo\ \ \ \mbox{on $\partial\Omega$}.
\end{eqnarray}
 %Thus
%the strong maximum principle of E. Hopf (\cite{GT}, p.35). The term on the right-hand side of (\ref{ot10}) %is negative, and thus $u$ can only achieves its maximum value on $\partial\Omega$. This gives 
Use $(u-\|b_0(x)\|_\infty)^+$ as a test function in (\ref{ot10}) to obtain
\begin{equation}
u\leq \|b_0(x)\|_\infty \ \ \mbox{on $\Omega$.}
\end{equation}
%This combined with (\ref{ot9}) yields
The term on the right-hand side of (\ref{ot9}) satisfies
\begin{eqnarray}
 -\sigma\frac{u-v}{\tau}&\geq&\frac{\sigma v}{\tau}-\frac{\sigma \|b_0(x)\|_\infty}{\tau}\nonumber\\
 &\geq& -\frac{1}{\tau}\left(\min_\Omega v-\|b_0(x)\|_\infty\right)^- \ \ \ \mbox{on $\Omega$}.\label{sm5}
\end{eqnarray}
On the boundary $\partial\Omega$, we have
\begin{equation}
 \psi=-\sigma\ln\bo\geq -\sigma\ln\|\bo\|_\infty\geq -\left(\ln\|\bo\|_\infty\right)^+.\label{sm2}
\end{equation}
We pick a number $M$ with the properties 
\begin{eqnarray}
\tau M &\geq &\frac{1}{\tau}\left(\| v\|_\infty+\|b_0(x)\|_\infty\right) ,\label{sm6}\\
M&\geq &\left(\ln\|\bo\|_\infty\right)^+.\label{sm1}
\end{eqnarray}
Adding $\tau M$ to both sides of (\ref{ot9}) yields
\begin{equation}
-\mbox{div}\left(3e^{3\psi}\nabla\psi\right)+\tau(\psi+M)  =-\sigma\frac{u-v}{\tau}+\tau M\geq 0 \ \ \ \mbox{in $\Omega$.}\label{sm3}
\end{equation}
The last step is due to (\ref{sm5}) and (\ref{sm6}). It follows from (\ref{sm1}) and (\ref{sm2}) that $(\psi+M)^-=0 $ on $\partial\Omega$. Use $(\psi+M)^-$ as a test function in (\ref{sm3}) to derive
\begin{equation}
-\int_{\Omega}3\ets|\nabla(\psi+M)^-|^2dx-\tau\int_{\Omega}|(\psi+M)^-|^2dx\geq 0.
\end{equation}
This implies
\begin{equation}
\psi\geq -M \ \ \mbox{on $\Omega$.}\label{st3}
\end{equation}
%Using $(\ets-\|\bt\|_\infty^\sigma)^-$ as a test function in this inequality, we derive
%The inequality (\ref{ot8}) will be a consequence of the following two claims
Consequently, the term on the right-hand of (\ref{ot10}) satisfies
\begin{equation}
-e^M\leq- e^{-\psi}\leq 0 \ \ \mbox{on $\Omega$.}
\end{equation}
This puts us in a position to apply the classical $L^\infty$ estimate (\cite{GT}, p. 189), from whence follows
\begin{equation}
\|u\|_\infty\leq c\|b_0(x)\|_\infty+c\|\nabla b_0(x)\|_2+c\|e^{-\psi}\|_\infty\leq c.
\end{equation}
%That is, we have
Combining this with our assumption on $v$ yields
\begin{equation}
\|u-v\|_\infty\leq c.
\end{equation}
	However, we can no longer use the classical $L^\infty$ estimate for (\ref{ot8}) because the elliptic coefficient in (\ref{ot9}) depends on the up bound of $\psi$. Instead, we are saved by 
	an argument similar to the proof of (\ref{st3}). To see this,
we choose a number $L$ so large that it satisfies
\begin{eqnarray}
\frac{1}{\tau}\|u-v\|_\infty &\leq&\tau L,\label{st1}\\
(\ln c_0)^-&\leq &L.\label{st2}
\end{eqnarray}
Note that (\ref{st1}) implies
\begin{equation}
-\sigma\frac{u-v}{\tau}-\tau L\leq 0\ \ \mbox{in $\Omega$,}
\end{equation}
while (\ref{st2}) asserts $(\psi-L)^+=0$ on $\partial\Omega$. 
Subtract $\tau L$ from both sides of (\ref{ot9}) and use $(\psi-L)^+$ as a test function in the resulting equation to derive
\begin{equation}
\int_{\Omega}3\ets|\nabla(\psi-L)^+|^2dx+\tau\int_{\Omega}|(\psi-L)^+|^2dx\leq 0.
\end{equation}
This together with (\ref{st3}) gives (\ref{ot8}). 

Obviously, the preceding calculations rely on the
term $\tau\psi$ in (\ref{ot9}). Thus it is essential to the existence proof.

The last part of the theorem ia s consequence of a rather standard bootstrap argument and the Schauder estimates (\cite{GT}, Chap. 6).
% asserts that $\psi, u\in %C^{2,\gamma}(\overline{\Omega})$ for some $\gamma\in(0,1)$. That is, $(\psi, u)$ is a classical solution.
 %Since 
 One bootstraps from $\psi\in L^\infty(\Omega)$. 
 %is bounded,
%we can conclude from the classical regularity theory that $(\psi, u)$ is a classical solution. 
We shall omit the details. The proof is complete.
\end{proof}
	\section{Proof of Theorems 1.1 and 1.2}
	\setcounter{equation}{0}
	
	The proof of Theorem 1.1 will be divided into several propositions. To begin with, we present our approximation scheme. This is based upon Proposition 2.1. Then
	we proceed to derive estimates similar to (\ref{e15}), (\ref{e24}), and (\ref{e32}) for our approximate problems. These estimates are shown to be sufficient to justify passing to the limit.

	Let $T>0$ be given. For each $j\in\{1,2,\cdots,\}$ we divide the time interval $[0,T]$ into $j$ equal subintervals. Set
	$$\tau=\frac{T}{j}.$$
	We discretize  (\ref{p1})-(\ref{p4}) as follows. For $k=1,\cdots, j$, we solve recursively the system
	\begin{eqnarray}
	\frac{u_k-u_{k-1}}{\tau}-\Delta\esk+\tau\psi_k&=&0\ \ \ \mbox{in $\Omega$},\label{s31}\\
	\Delta\uk&=&e^{-\psi_k}\ \ \ \mbox{in $\Omega$},\label{s32}\\
	\uk&=&\bzj\ \ \ \mbox{on $\partial\Omega$},\label{s33}\\
	\psi_k&=&-\ln\boj\ \ \ \mbox{on $\partial\Omega$},\label{s34}
	\end{eqnarray}
	where the sequences $\{u_{0j}(x)\}, \{\bzj\}$, and $\{\boj\}$ are the respective mollifications of $u_0(x), \bz, \bo$. 
	Thus $\{\boj\}$ is bounded and convergent in $W^{2,2}(\Omega)\cap L^\infty(\Omega)$ and satisfies
	%lie in $C^{\infty}(\overline{\Omega})$ and converge strongly to $\bz, \ \bo$ in $W^{1,2}(\Omega)\cap %L^\infty(\Omega)$ and , respectively. Furthermore, 
%	In particular,
%	we have
	%$\boj$ satisfies
	%there is a positive number $c_0$  such that
	%$\{\boj\}$ satisfies
	\begin{equation}
	\boj\geq c_0\ \ \mbox{on $\Omega$ for each $j$}.\label{blb}
	\end{equation}
	Similar statements can be made of the sequences  $\{u_{0j}(x)\}$ and $\{\bzj\}$.
	
	Introduce the functions
	\begin{eqnarray}
	\utj(x,t)&=&\frac{t-t_{k-1}}{\tau}\uk(x)+\frac{t_k-t}{\tau}\uko(x), \ x\in\Omega,  \ t\in(t_{k-1},t_k],\label{dut}\\
	\ubj(x,t)&=&\uk(x), \  \ x\in\Omega, \ t\in(t_{k-1},t_k],\label{dub}\\
	\sbj(x,t)&=&\skx, \ \ \ x\in\Omega, \ \ t\in(t_{k-1},t_k],\\
	\rbj(x,t)&=&\rk(x)\equiv e^{\psi_{k}(x)}, \ \ x\in\Omega, \ t\in(t_{k-1},t_k],\\
	\rtj(x,t)&=&\frac{t-t_{k-1}}{\tau} e^{\psi_{k}(x)}+\frac{t_k-t}{\tau}e^{\psi_{k-1}(x)}, \ x\in\Omega,  \ t\in(t_{k-1},t_k],
	\end{eqnarray}
	where $t_k=k\tau$ and
	\begin{equation}
	\psi_0(x)=-\ln\Delta u_{0j}(x).
	\end{equation} We can rewrite (\ref{s31})-(\ref{s33}) as
	\begin{eqnarray}
	\frac{\partial\utj}{\partial t}-\Delta \esbj+\tau\sbj&=&0\ \ \ \mbox{in $\omt$},\label{omm1}\\
	\Delta\ubj &=&e^{-\sbj}\ \ \ \ \mbox{in $\omt$}.\label{omm2}
	\end{eqnarray}
	We proceed to derive a priori estimates for the sequence of approximate solutions $\{\utj,\ubj,\sbj\}$. %The discretized version of (\ref{nm1}) is the following
	\begin{prop}
		We have
			\begin{equation}
			\max_{0\leq t\leq T}\int_\Omega \left(e^{2\overline{\psi}_j(x,t)}+e^{-\overline{\psi}_j(x,t)}\right)\, dx
			+\int_{\Omega_T}\left(\Delta\esbj\right)^2\, dxdt+\tau\int_{\Omega_T}e^{3\overline{\psi}_j}|\nabla\overline{\psi}_j|^2\, dxdt\leq c.
				\end{equation}
				Here and in what follows $c$ is a positive number that depends only on the given data.
	\end{prop}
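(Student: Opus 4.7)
The plan is to mimic the continuous derivation of (\ref{e15}), replacing $\partial_t u$ by the discrete difference $\frac{u_k-u_{k-1}}{\tau}$ and using the chain-rule surrogate (L3) in place of the ordinary chain rule. The decisive test function is $-2\Delta e^{3\psi_k}$. Multiplying (\ref{s31}) by it and integrating over $\Omega$ produces the identity
\[
-2\io\frac{u_k-u_{k-1}}{\tau}\Delta e^{3\psi_k}\,dx+2\io\bigl(\Delta e^{3\psi_k}\bigr)^2 dx-2\tau\io\psi_k\Delta e^{3\psi_k}\,dx=0,
\]
which is the discrete analogue of (\ref{e11}) plus a regularization term.

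The key computation is the discrete version of (\ref{e12}) for the cross term. Because $u_k-u_{k-1}$ and $e^{3\psi_k}-b_{1j}^{-3}$ both vanish on $\partial\Omega$, two integrations by parts are legitimate, and (\ref{s32}) turns $\Delta u_k-\Delta u_{k-1}$ into $e^{-\psi_k}-e^{-\psi_{k-1}}$:
\[
-2\io\frac{u_k-u_{k-1}}{\tau}\Delta e^{3\psi_k}\,dx=-\frac{2}{\tau}\io\bigl(e^{-\psi_k}-e^{-\psi_{k-1}}\bigr)\bigl(e^{3\psi_k}-b_{1j}^{-3}\bigr)dx-\frac{2}{\tau}\io\uvk\Delta b_{1j}^{-3}\,dx.
\]
Invoking (L3) on the first integrand yields the telescoping lower bound
\[
\ge\frac{1}{\tau}\io\bigl(e^{2\psi_k}-e^{2\psi_{k-1}}\bigr)dx+\frac{2}{\tau}\io b_{1j}^{-3}\bigl(e^{-\psi_k}-e^{-\psi_{k-1}}\bigr)dx-\frac{2}{\tau}\io\uvk\Delta b_{1j}^{-3}\,dx.
\]
For the regularization term I integrate by parts using $\nabla e^{3\psi_k}=3e^{3\psi_k}\nabla\psi_k$, producing $6\tau\io e^{3\psi_k}|\nabla\psi_k|^2\,dx$ minus the boundary contribution $2\tau\oint_{\po}\psi_k\partial_\nu e^{3\psi_k}\,dS$.

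I then multiply through by $\tau$ and sum $k=1,\dots,\ell$ for any $\ell\le j$. The discrete differences telescope and I obtain
\begin{align*}
\io e^{2\psi_\ell}dx&+2\io b_{1j}^{-3}e^{-\psi_\ell}dx+2\tau\sum_{k=1}^\ell\!\io\bigl(\Delta e^{3\psi_k}\bigr)^2 dx+6\tau^2\sum_{k=1}^\ell\!\io e^{3\psi_k}|\nabla\psi_k|^2 dx\\
&\le \io e^{2\psi_0}dx+2\io b_{1j}^{-3}e^{-\psi_0}dx+2\io(u_\ell-u_{0j})\Delta b_{1j}^{-3}\,dx+2\tau^2\sum_{k=1}^\ell\!\oint_{\po}\psi_k\partial_\nu e^{3\psi_k}\,dS.
\end{align*}
The first two terms on the right are uniformly controlled by (H2)-(H3) and the properties of the mollifiers. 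The third is estimated via Young's inequality together with the one-sided bound $u_\ell\le\|b_{0j}\|_\infty$ (which follows from $\Delta u_\ell=e^{-\psi_\ell}\ge 0$ and the weak maximum principle) and an elliptic $L^2$-control of $u_\ell$ in terms of $\|e^{-\psi_\ell}\|_{L^2(\Omega)}$, which one absorbs bootstrap-style on the left. Since (\ref{blb}) forces $b_{1j}^{-3}\ge\|b_{1j}\|_\infty^{-3}>0$ uniformly, the term $2\int b_{1j}^{-3}e^{-\psi_\ell}\,dx$ dominates a positive multiple of $\io e^{-\psi_\ell}\,dx$. Finally, taking the supremum over $\ell$ and converting the sums back to the $\omt$-integrals through the step-function definition of $\sbj$ delivers the stated bound.

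The main obstacle is the boundary term $2\tau^2\sum_k\oint_{\po}\psi_k\partial_\nu e^{3\psi_k}\,dS$, because $\|\partial_\nu e^{3\psi_k}\|_{L^2(\po)}$ is not directly controlled by the quantities on the left. I intend to rewrite it via the divergence identity $\oint g\partial_\nu f\,dS=\io\nabla g\cdot\nabla f\,dx+\io g\Delta f\,dx$ with $g$ a smooth extension of $-\ln\boj$, and then exploit the favorable $\tau^2$ prefactor together with Young's inequality so that the resulting $\|\Delta e^{3\psi_k}\|_{L^2}^2$ contribution is absorbed into the $2\tau\|\Delta e^{3\psi_k}\|_{L^2}^2$ summand already present on the left; the companion gradient piece is handled by the weighted $L^2$-quantity $\io e^{3\psi_k}|\nabla\psi_k|^2\,dx$ also present on the left, again via the $\tau^2$ smallness.
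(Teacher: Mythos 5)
Your route is genuinely different from the paper's: you test the undifferentiated equation (\ref{s31}) with $-2\Delta e^{3\psi_k}$ and then move the Laplacian onto $u_k-u_{k-1}$ by parts, whereas the paper first takes the Laplacian of (\ref{s31}) (obtaining $\frac{e^{-\psi_k}-e^{-\psi_{k-1}}}{\tau}-\Delta^2e^{3\psi_k}+\tau\Delta\psi_k=0$) and then tests with $e^{3\psi_k}-\left(b_{1j}(x)\right)^{-3}$, which vanishes on $\partial\Omega$. That choice of order is not cosmetic: it eliminates at the outset both of the terms that cause you trouble, namely the boundary integral $\oint_{\partial\Omega}\psi_k\partial_\nu e^{3\psi_k}\,dS$ (the paper instead needs only the trace $\Delta e^{3\psi_k}\mid_{\partial\Omega}=-\tau\ln b_{1j}$, read off from the equation itself) and the cross term $\int_\Omega(u_k-u_{k-1})\Delta\left(b_{1j}\right)^{-3}dx$, which never appears because $\frac{u_k-u_{k-1}}{\tau}$ has already been converted into $\frac{e^{-\psi_k}-e^{-\psi_{k-1}}}{\tau}$ before any pairing with data occurs. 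Your treatment of the boundary term via the divergence identity and the $\tau^2$ prefactor can be made to work (the piece $\int_\Omega\nabla g\cdot\nabla e^{3\psi_k}\,dx=3\int_\Omega e^{3\psi_k}\nabla g\cdot\nabla\psi_k\,dx$ additionally needs $\int_\Omega e^{3\psi_k}dx$, which you must get from the Calderon--Zygmund bound $\|e^{3\psi_k}\|_{W^{2,2}}\leq c\|\Delta e^{3\psi_k}\|_2+c$; this is the same device the paper uses).

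The genuine gap is the term $2\int_\Omega(u_\ell-u_{0j})\Delta\left(b_{1j}\right)^{-3}dx$. Your proposed control of $\|u_\ell\|_{L^2}$ by elliptic theory requires $\|e^{-\psi_\ell}\|_{L^2(\Omega)}$, i.e.\ $\int_\Omega e^{-2\psi_\ell}dx$, and that quantity is \emph{not} controlled by your left-hand side, which only bounds $\int_\Omega e^{2\psi_\ell}dx$ and the $L^1$-norm $\int_\Omega e^{-\psi_\ell}dx$; so the ``bootstrap-style absorption'' has nothing to absorb into. The one-sided bound $u_\ell\leq\|b_{0j}\|_\infty$ does not help either: $u_\ell$ is subharmonic and can be arbitrarily negative in the interior, so no two-sided $L^1$ or $L^2$ bound follows from it. The term can be rescued, but by a different mechanism: write $u_\ell-u_{0j}=\tau\sum_{k=1}^{\ell}\frac{u_k-u_{k-1}}{\tau}$, use the equation to replace $\frac{u_k-u_{k-1}}{\tau}=\Delta e^{3\psi_k}-\tau\psi_k$, apply Cauchy--Schwarz in $k$ to get $\|u_\ell-u_{0j}\|_2^2\leq cT\left(\tau\sum_k\|\Delta e^{3\psi_k}\|_2^2+\tau^3\sum_k\|\psi_k\|_2^2\right)$, absorb the first sum into the left with a small Young constant, and handle the $\psi_k^2$ contribution with the elementary inequality $s^2\leq e^{2s}+e^{-s}+K$ followed by Gronwall --- exactly the two closing ingredients ((\ref{slt}) and Gronwall) the paper uses and which your writeup omits. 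Without some such repair, the step as stated fails.
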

	\begin{proof}
		Take the Laplacian of both sides of (\ref{s31}) and substitute (\ref{s32}) into the resulting
		equation to obtain
		\begin{equation}\label{sfa1}
		%\frac{u_k-u_{k-1}}{\tau}
		\frac{e^{-\psi_k}-e^{-\psi_{k-1}}}{\tau}-\Delta^2\esk+\tau\Delta\psi_k=0\ \ \ \mbox{in $\Omega$.}
		\end{equation}
	Multiply through the above equation by $\left(\esk-\btj\right)$ and integrate the resulting equation over
	$\Omega$ to yield
			\begin{eqnarray}
			%\frac{u_k-u_{k-1}}{\tau}
		\lefteqn{	\int_\Omega\frac{e^{-\psi_k}-e^{-\psi_{k-1}}}{\tau}\left(\esk-\btj\right)\, dx}\nonumber\\
		&&-\int_\Omega\Delta^2\esk\left(\esk-\btj\right)\, dx+\tau\int_\Omega\Delta\psi_k\left(\esk-\btj\right)\, dx=0.\label{sw2}
			\end{eqnarray}
	We estimate each integral on the left-hand side of the above equation. We begin with the first integral.
	This will be done via an application of (L3) as follows:
	\begin{eqnarray}
	\lefteqn{\int_\Omega\frac{e^{-\psi_k}-e^{-\psi_{k-1}}}{\tau}\left(\esk-\btj\right)\, dx}\nonumber\\
	&=&\int_\Omega\frac{e^{-\psi_k}-e^{-\psi_{k-1}}}{\tau}\esk\, dx-\int_\Omega\frac{e^{-\psi_k}-e^{-\psi_{k-1}}}{\tau}\btj\, dx\nonumber\\
	&\leq& -\frac{1}{2}\int_\Omega\frac{e^{2\psi_k}-e^{2\psi_{k-1}}}{\tau}\, dx-\int_\Omega\frac{e^{-\psi_k}-e^{-\psi_{k-1}}}{\tau}\btj\, dx.\label{it1}
	\end{eqnarray}	
	To estimate the second integral, we first observe from (\ref{s31}) and (\ref{s34}) that
	\begin{equation}
	\Delta\esk\mid_{\partial\Omega}=-\tau\ln\boj.\label{sfa2}
	\end{equation}
	Keeping this in mind, we compute
	\begin{eqnarray}
	\lefteqn{-\int_\Omega\Delta^2\esk\left(\esk-\btj\right)\, dx}\nonumber\\
	&=&\int_\Omega\nabla\left(\Delta\esk+\tau\ln\boj\right)\nabla\left(\esk-\btj\right)\, dx\nonumber\\
	&&-\tau\int_\Omega\nabla\ln\boj\nabla\left(\esk-\btj\right)\, dx\nonumber\\
		&=&-\int_\Omega\left(\Delta\esk+\tau\ln\boj\right)\Delta\left(\esk-\btj\right)\, dx\nonumber\\
		&&+\tau\int_\Omega\Delta\ln\boj\left(\esk-\btj\right)\, dx\nonumber\\
			&\leq&-c\int_\Omega\left(\Delta\esk\right)^2\, dx
		+\varepsilon\int_\Omega\left(\esk\right)^2\, dx+c(\varepsilon),\ \ \varepsilon>0.\label{sw1}
	\end{eqnarray}
		The Carderon-Zygmund inequality implies
		\begin{equation}
		\|\esk\|^2_{W^{2,2}(\Omega)}\leq c\int_\Omega\left(\Delta\esk\right)^2\, dx+c\|\boj\|^2_{W^{2,2}(\Omega)}\leq c\int_\Omega\left(\Delta\esk\right)^2\, dx+c.\label{cz}
		\end{equation}
	Using this in (\ref{sw1}) and then
	 choosing $\varepsilon$ sufficiently small, we arrive at 
	 \begin{equation}
	 -\int_\Omega\Delta^2\esk\left(\esk-\btj\right)\, dx\leq -c\int_\Omega\left(\Delta\esk\right)^2\, dx+c.\label{it2}
	 \end{equation}
	% &\leq&
	Now we have reached the last integral in (\ref{sw2}). For this, we have
	\begin{eqnarray}
	\lefteqn{\tau\int_\Omega\Delta\psi_k\left(\esk-\btj\right)\, dx}\nonumber\\
&=&-\tau\int_\Omega\nabla\psi_k\nabla\left(\esk-\btj\right)\, dx\nonumber\\
	&=&-\tau\int_\Omega\nabla\psi_k\nabla\esk\, dx
	+\tau\int_\Omega\nabla\left(\psi_k+\ln\boj\right)\nabla\btj\, dx\nonumber\\
	&&-\tau\int_\Omega\nabla\ln\boj\nabla\btj\, dx\nonumber\\
	&\leq&-3\tau\int_\Omega\esk|\nabla\psi_k|^2\, dx-\tau\int_\Omega\left(\psi_k+\ln\boj\right)\Delta\btj\, dx+c\nonumber\\
	&\leq&-3\tau\int_\Omega\esk|\nabla\psi_k|^2\, dx+\int_\Omega\left(\psi_k\right)^2\, dx+c.\label{it3}
	\end{eqnarray}
	Using (\ref{it1}), (\ref{it2}), and (\ref{it3}) in (\ref{sw2}), multiplying through the resulting inequality by $-1$, we deduce
	\begin{eqnarray}
	\lefteqn{\frac{1}{2}\int_\Omega\frac{e^{2\psi_k}-e^{2\psi_{k-1}}}{\tau}\, dx+\int_\Omega\frac{e^{-\psi_k}-e^{-\psi_{k-1}}}{\tau}\btj\, dx}\nonumber\\
	&&+\int_\Omega\left(\Delta\esk\right)^2\, dx+\tau\int_\Omega\esk|\nabla\psi_k|^2\, dx\nonumber\\
	&\leq&c\int_\Omega\left(\psi_k\right)^2\, dx+c.
	\end{eqnarray}
	Multiply through this inequality by $\tau$ and sum up the resulting inequality over $k$ to derive
	\begin{eqnarray}
	\lefteqn{\int_\Omega\left(e^{2\overline{\psi}_j}+e^{-\overline{\psi}_j}\right)\, dx+\int_{\Omega_s}\left(\Delta e^{3\overline{\psi}_j}\right)^2\, dx\, dt}\nonumber\\
	&&+\tau\int_{\Omega_s}e^{3\overline{\psi}_j}|\nabla\overline{\psi}_j|^2\, dx\, dt\nonumber\\
	&\leq&c\int_{\Omega_s}\left(\overline{\psi}_j\right)^2\, dx\, dt+c.\label{sw3}
	\end{eqnarray}
	Here we have used the assumption (\ref{blb}). It is not difficult to see that the above inequality holds
	for each $s\in (0, T]$. We easily conclude that there is a positive number $K$ such that
	$$e^{2s}+e^{-s}+K\geq s^2\ \ \mbox{for each $s\in\mathbb{R}$}.$$
	That is,
	\begin{equation}
\int_\Omega\left(\overline{\psi}_j\right)^2dx\leq	\int_\Omega\left(e^{2\overline{\psi}_j}+e^{-\overline{\psi}_j}\right)\, dx+c.\label{slt}
	\end{equation}
	Using this in (\ref{sw3}), we deduce
	\begin{equation}
	\int_\Omega\left(e^{2\overline{\psi}_j}+e^{-\overline{\psi}_j}\right)\, dx\leq c\int_{\Omega_s}\left(e^{2\overline{\psi}_j}+e^{-\overline{\psi}_j}\right)\, dx\, dt+c.
	\end{equation}
	An application of Gronwall's inequality yields the proposition. The proof is complete.
		\end{proof}
		Note that we cannot mimic the calculations leading to (\ref{e15}) to prove this proposition.

		\begin{prop}
			There holds
			\begin{equation}
	\max_{0\leq t\leq T}\int_\Omega |\nabla\ubj|^2\, dx+
	\int_{\Omega_T}|\nabla e^{\sbj}|^2\, dxdt-\tau\int_{\{\sbj\leq 0\}}\sbj e^{-\sbj} \, dxdt
	\leq c.
					\end{equation}
		\end{prop}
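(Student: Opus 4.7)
The plan is to discretize the derivation of (\ref{e24}) by multiplying equation (\ref{s31}) by $\tau(\Delta u_k - b_{1j}(x))$, integrating over $\Omega$, and summing for $k=1,\ldots,n$ with $n\leq j$ arbitrary. A crucial observation is that both $u_k-u_{k-1}$ and $\Delta u_k-b_{1j}$ vanish on $\partial\Omega$: the first because $u_k\equiv b_{0j}$ on $\partial\Omega$ for every $k$, and the second because (\ref{s34}) forces $e^{-\psi_k}|_{\partial\Omega}=b_{1j}$, so all boundary terms arising from integration by parts disappear.

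For the time-derivative piece, integration by parts combined with (L1) yields $\int_\Omega(u_k-u_{k-1})\Delta u_k\,dx\leq -\tfrac12\int_\Omega(|\nabla u_k|^2-|\nabla u_{k-1}|^2)\,dx$, which telescopes upon summation to furnish the desired $\tfrac12\|\nabla u_n\|_2^2$; the part involving $b_{1j}(u_k-u_{k-1})$ telescopes directly. For the diffusion piece, two integrations by parts and the identity $\nabla e^{3\psi_k}\cdot\nabla e^{-\psi_k}=-3e^{2\psi_k}|\nabla\psi_k|^2=-3|\nabla e^{\psi_k}|^2$ produce
\[
\tau\int_\Omega(\Delta u_k-b_{1j})\Delta e^{3\psi_k}\,dx=3\tau\int_\Omega|\nabla e^{\psi_k}|^2\,dx+\tau\int_\Omega\nabla e^{3\psi_k}\cdot\nabla b_{1j}\,dx,
\]
the last term of which will be controlled, after summation, by Cauchy-Schwarz together with the $L^2(0,T;W^{2,2}(\Omega))$ bound on $e^{3\overline{\psi}_j}$ that follows from Proposition 3.1 and the Calder\'on-Zygmund inequality (\ref{cz}).

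The regularization $\tau\psi_k$ contributes the additional term $-\tau^2\int_\Omega\psi_k(e^{-\psi_k}-b_{1j})\,dx$. Splitting $\int\psi_k e^{-\psi_k}$ according to the sign of $\psi_k$, the portion over $\{\psi_k\leq 0\}$ is non-negative and yields after summation exactly the third term in the proposition (since $\tau^2\sum_k=\tau\int_0^{t_n}dt$ produces the advertised coefficient $\tau$), while on $\{\psi_k>0\}$ the elementary inequality $se^{-s}\leq 1/e$ absorbs the contribution into the constant. The term $\tau^2\int\psi_k b_{1j}\,dx$ is handled using the $L^2$ bound on $\overline{\psi}_j$ furnished by Proposition 3.1 through the pointwise comparison (\ref{slt}).

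The only quantity still to be controlled uniformly is $\|u_n\|_{L^2(\Omega)}$, needed to estimate $\int b_{1j}(u_n-u_0)\,dx$. For this I read off from (\ref{omm1}) the representation $\partial_t\tilde u_j=\Delta e^{3\overline{\psi}_j}-\tau\overline{\psi}_j$ on each time strip, whence $\|\partial_t\tilde u_j\|_{L^2(\Omega_T)}\leq c$ by Proposition 3.1 and (\ref{slt}); then the pointwise bound $|u_n(x)|\leq\int_0^T|\partial_t\tilde u_j(x,t)|\,dt+|u_{0j}(x)|$ integrates to yield $\|u_n\|_{L^2(\Omega)}\leq c$. The main bookkeeping obstacle I anticipate is the $\nabla e^{3\psi_k}\cdot\nabla b_{1j}$ term, which is the only place where the relatively strong $W^{2,2}$ control on $e^{3\psi_k}$ inherited from Proposition 3.1 is essential; once that is in hand the remainder of the argument parallels Proposition 3.1 with a different test multiplier.
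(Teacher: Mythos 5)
Your argument is correct, and it is essentially the faithful discrete analogue of the continuous computation (\ref{e21})--(\ref{e24}): you test (\ref{s31}) with $\Delta u_k-b_{1j}(x)$, which vanishes on $\partial\Omega$ by (\ref{s32}) and (\ref{s34}), so all boundary terms drop out at the first integration by parts. The paper's own proof instead tests with $\Delta u_k$ alone, substitutes $\Delta u_k=e^{-\psi_k}$ at once, and shifts the boundary correction onto the other factor by splitting $e^{3\psi_k}=\left(e^{3\psi_k}-(b_{1j})^{-3}\right)+(b_{1j})^{-3}$; the data terms then cancel in pairs and the estimate closes without ever needing a bound on $\|u_k\|_{L^2(\Omega)}$ and without invoking Proposition 3.1. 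Your route pays for its cleaner boundary bookkeeping with two extra ingredients, both of which you supply correctly and both of which are legitimately available since Proposition 3.1 precedes this statement: the uniform bound $\|u_k\|_{L^2(\Omega)}\le c$ obtained from $\partial_t\utj=\Delta e^{3\overline{\psi}_j}-\tau\overline{\psi}_j$ together with Proposition 3.1 and (\ref{slt}) (the discrete counterpart of (\ref{e13})), and the $L^2(0,T;W^{2,2}(\Omega))$ control of $e^{3\overline{\psi}_j}$ coming from Proposition 3.1 and (\ref{cz}), which absorbs $\tau\sum_k\int_\Omega\nabla e^{3\psi_k}\cdot\nabla b_{1j}\,dx$ by Cauchy--Schwarz. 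The remaining core steps --- (L1) for the telescoping of $\int_\Omega|\nabla u_k|^2\,dx$, the identity $\nabla e^{3\psi_k}\cdot\nabla e^{-\psi_k}=-3|\nabla e^{\psi_k}|^2$, and the sign-split of $\psi_k e^{-\psi_k}$ using $se^{-s}\le e^{-1}$ for $s\ge 0$ --- coincide with the paper's, so the two proofs differ only in how the inhomogeneous boundary data are accounted for.
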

		
		Obviously, this proposition is the discretized version of (\ref{e24}).
		
		\begin{proof} Multiply through  (\ref{s31}) by $\Delta u_k$ and then integrate over $\Omega$ to obtain
			\begin{equation}
			\int_\Omega\frac{ u_k- u_{k-1}}{\tau}\Delta\uk \, dx	-\int_\Omega\Delta\esk\Delta\uk \, dx
			+\tau\int_\Omega\psi_k \Delta\uk \, dx=0.\label{ota10}
			\end{equation}
			Note that 
			\begin{equation}
			( u_k- u_{k-1})\mid_{\partial\Omega}=0.
			\end{equation}The first integral in (\ref{ota10}) can be estimated as follows:
			\begin{eqnarray}
			\int_\Omega\frac{ u_k- u_{k-1}}{\tau}\Delta\uk \, dx&=&-\int_\Omega\frac{\nabla u_k-\nabla u_{k-1}}{\tau}\cdot\nabla\uk \, dx\nonumber\\
			&\leq& -\frac{1}{2\tau}	\int_\Omega\left(|\nabla u_k|^2-|\nabla u_{k-1}|^2\right)\, dx.\label{sf2}
			\end{eqnarray}
			Recalling (\ref{s32}), we compute the second integral in (\ref{ota10}) as follows:
			\begin{eqnarray}
				-\int_\Omega\Delta\esk\Delta\uk \, dx &=&	-\int_\Omega\Delta\esk e^{-\psi_k} \, dx\nonumber\\
				&=&-\int_\Omega\Delta\left(\esk -\btj\right)e^{-\psi_k}\, dx-\int_\Omega\Delta \btj e^{-\psi_k}\, dx\nonumber\\
				&=&\int_\Omega\nabla\left(\esk -\btj\right)\nabla e^{-\psi_k}\, dx\nonumber\\
				&&-\int_\Omega\Delta \btj \left(e^{-\psi_k}-\boj\right)\, dx\nonumber\\
				&&-\int_\Omega\Delta \btj \boj\, dx\nonumber\\
				&=&-3\int_\Omega e^{2\psi_k}|\nabla\psi_k |^2\, dx-\int_\Omega\nabla\btj\nabla e^{-\psi_k}\, dx\nonumber\\
				&&+\int_\Omega\nabla\btj \nabla\left(e^{-\psi_k}-\boj\right)\, dx\nonumber\\
				&&-\int_\Omega\Delta \btj \boj\, dx
				\leq -3\int_\Omega|\nabla  e^{\psi_k} |^2\, dx+c.\label{sf1}
					\end{eqnarray}
			% -\int_\Omega\nabla\left(\Delta\esk\right)\cdot\nabla\uk \, dx
			Use (\ref{sf1}) and (\ref{sf2}) in (\ref{ota10}) and multiply through the resulting equation by $-1$ to derive
			\begin{equation}
			\frac{1}{2\tau}	\int_\Omega\left(|\nabla u_k|^2-|\nabla u_{k-1}|^2\right)\, dx+3\int_\Omega|\nabla  e^{\psi_k} |^2\, dx-\tau\io\psi_k e^{-\psi_k}dx\leq c.
			\end{equation}
			 Multiplying through the above inequality by $\tau$, summing up the resulting one over $k$, and taking a note of the fact that
			 $$
			 \sbj e^{-\sbj}\leq e^{-1}\ \ \mbox{on the set where $\sbj\geq 0$,}
			 $$ we obtain the proposition. 
			% Note that the last step in () is due to Theorem 3.1.
			\end{proof}

				%\end{proof}
				Now we are ready to present the next proposition.
				\begin{prop}
					We have
					\begin{equation}
					\max_{0\leq t\leq T}
					\int_\Omega\left(\partial_t\utj\right)^2\, dx
					+\int_{\Omega_T}\left(\partial_t\rtj\right)^2\, dxdt\leq c.
					\end{equation}
					\end{prop}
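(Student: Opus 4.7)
The plan is to discretize the continuous derivation (\ref{e31})--(\ref{e32}) in the time variable. Writing $w_k := (u_k - u_{k-1})/\tau$ and extending the scheme to $k=0$ by setting $\psi_0 := -\ln\Delta u_{0j}$ and $w_0 := \Delta e^{3\psi_0} - \tau\psi_0$, equation (\ref{s31}) gives $w_k = \Delta e^{3\psi_k} - \tau\psi_k$ for every $k\ge 0$, so subtracting consecutive equations produces the discrete time-differentiated identity
\begin{equation*}
w_k - w_{k-1} = \Delta\!\left(e^{3\psi_k} - e^{3\psi_{k-1}}\right) - \tau(\psi_k - \psi_{k-1}), \qquad k \ge 1.
\end{equation*}
I multiply this identity by $w_k$ and integrate over $\Omega$.

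By (L1) the left-hand side dominates $\tfrac12(\|w_k\|_2^2 - \|w_{k-1}\|_2^2)$. For the Laplacian term on the right I note that $w_k$ vanishes on $\partial\Omega$ by (\ref{s33}) together with the natural compatibility $u_{0j} = b_{0j}$ on $\partial\Omega$, and $e^{3\psi_k} - e^{3\psi_{k-1}}$ also vanishes on $\partial\Omega$ because $\psi_k\mid_{\partial\Omega} = -\ln b_{1j}$ is independent of $k$; hence two integrations by parts followed by (\ref{s32}) yield
\begin{equation*}
\int_\Omega \Delta\!\left(e^{3\psi_k}-e^{3\psi_{k-1}}\right)w_k\,dx = \frac{1}{\tau}\int_\Omega \left(e^{3\psi_k}-e^{3\psi_{k-1}}\right)\!\left(e^{-\psi_k}-e^{-\psi_{k-1}}\right)dx,
\end{equation*}
which by (L4) applied with $a=e^{\psi_k}$, $b=e^{\psi_{k-1}}$ is bounded above by $-\tfrac{3}{\tau}\|e^{\psi_k}-e^{\psi_{k-1}}\|_2^2$. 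This is precisely the term that, once summed in $k$, assembles into $\int_{\Omega_T}(\partial_t\rtj)^2\,dxdt$, since $\rtj$ is piecewise linear in $t$ with slope $(e^{\psi_k}-e^{\psi_{k-1}})/\tau$ on $(t_{k-1},t_k]$.

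The remaining perturbation $-\tau\int_\Omega (\psi_k-\psi_{k-1})w_k\,dx$ is bounded by Young's inequality and Proposition 3.1: the pointwise estimate $s^2 \le c(e^{2s}+e^{-s})+c$ invoked in (\ref{slt}) gives $\|\psi_k\|_2\le c$ uniformly in $k$, so the perturbation is at most $c\tau + \tfrac{\tau}{2}\|w_k\|_2^2$. Summing from $k=1$ to $m\le j$, and using $\|w_0\|_2 \le \|\Delta(\Delta u_{0j})^{-3}\|_2 + \tau\|\ln\Delta u_{0j}\|_2 \le c$ thanks to (H2)--(H3), produces
\begin{equation*}
\tfrac12\|w_m\|_2^2 + \tfrac{3}{\tau}\sum_{k=1}^m \|e^{\psi_k}-e^{\psi_{k-1}}\|_2^2 \le c + cT + \tfrac{1}{2}\sum_{k=1}^m \tau\|w_k\|_2^2.
\end{equation*}
Absorbing the $k=m$ summand on the right into the left (permissible once $\tau$ is small) and invoking the discrete Gronwall inequality yields $\max_k \|w_k\|_2^2 \le c$, which is the $L^\infty_t$ bound on $\partial_t\utj$ since that function is piecewise constant with value $w_k$ on $(t_{k-1},t_k]$; the leftover good term delivers the $L^2(\Omega_T)$ bound on $\partial_t\rtj$. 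The main technical obstacle is this regularization remainder: unlike in the continuous calculation (\ref{e31}), it does not cancel and must be absorbed through discrete Gronwall, which is why the Proposition 3.1 bound on $\sbj$ is essential here.
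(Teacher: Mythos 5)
Your proof is correct and follows essentially the same route as the paper's: subtract consecutive copies of the scheme, test with the discrete time derivative $w_k=(u_k-u_{k-1})/\tau$, use (L1) for the telescoping term, integrate by parts twice and apply (L4) to produce the good term $\tfrac{3}{\tau}\|e^{\psi_k}-e^{\psi_{k-1}}\|_2^2$, absorb the $\tau(\psi_k-\psi_{k-1})$ remainder via Proposition 3.1, and close with discrete Gronwall. The only difference is bookkeeping at the initial step: you define $w_0$ by evaluating the scheme's formula at $\psi_0$ and bound it directly from the hypotheses on the data, whereas the paper instead proves $\|w_1\|_2\leq c$ separately by testing the $k=1$ equation with $w_1$; both variants rest on the same requirement that $\Delta(\Delta u_{0j})^{-3}$ be bounded in $L^2$.
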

				\begin{proof}	First we claim
					\begin{equation}
					\int_\Omega\left(\frac{u_1-u_{0j}(x)}{\tau}\right)^2\, dx\leq c.
					\end{equation}
					To see this, we let $k=1$ in (\ref{s31}) to obtain
					\begin{equation}
					\frac{u_1-u_{0j}(x)}{\tau}-\Delta e^{3\psi_1}+\tau\psi_1=0\ \ \mbox{on $\Omega$.}
					\end{equation}
					Multiply through this equation by $\frac{u_1-u_{0j}(x)}{\tau}$ and integrate the resulting equation over $\Omega$ to deduce
				%	We easily infer from (\ref{otn1}) that
				\begin{equation}
				\int_\Omega\left(\frac{u_1-u_{0j}(x)}{\tau}\right)^2\, dx-\int_\Omega\Delta e^{3\psi_1}\frac{u_1-u_{0j}(x)}{\tau}\, dx+\tau\int_\Omega\psi_1\frac{u_1-u_{0j}(x)}{\tau}\, dx=0.\label{ui1}
				\end{equation}
				Note that
				\begin{eqnarray}
				\int_\Omega\Delta e^{3\psi_1}\frac{u_1-u_{0j}(x)}{\tau}\, dx&=&\int_\Omega\Delta\left(e^{3\psi_1}-e^{3\psi_0}\right) \frac{u_1-u_{0j}(x)}{\tau}\, dx\nonumber\\
				&&+\int_\Omega\Delta e^{3\psi_0} \frac{u_1-u_{0j}(x)}{\tau}\, dx\nonumber\\
				&=&\int_\Omega\left(e^{3\psi_1}-e^{3\psi_0}\right) \Delta\frac{u_1-u_{0j}(x)}{\tau}\, dx\nonumber\\
				&&
				+\int_\Omega\Delta \btj \frac{u_1-u_{0j}(x)}{\tau}\, dx\nonumber\\
				&\leq&\frac{1}{\tau}\int_\Omega\left(e^{3\psi_1}-e^{3\psi_0}\right) (e^{-\psi_1}-e^{-\psi_0})\, dx\nonumber\\
				&&+\varepsilon\int_\Omega\left(\frac{u_1-u_{0j}(x)}{\tau}\right)^2\, dx+c(\varepsilon)\nonumber\\
				&\leq&\varepsilon\int_\Omega\left(\frac{u_1-u_{0j}(x)}{\tau}\right)^2\, dx+c(\varepsilon),\ \ \varepsilon>0.\label{ui2}
				\end{eqnarray}
				Here we have used the fact that
				$$\left(e^{3\psi_1}-e^{3\psi_0}\right) (e^{-\psi_1}-e^{-\psi_0})\leq 0.$$
				By Proposition 3.1 and  (\ref{slt}), we have
				\begin{eqnarray}
				\left|\tau\int_\Omega\psi_1\frac{u_1-u_{0j}(x)}{\tau}\, dx\right|&\leq&\varepsilon\int_\Omega\left(\frac{u_1-u_{0j}(x)}{\tau}\right)^2\, dx+c(\varepsilon)\tau^2\int_{\Omega}\left(\psi_1\right)^2dx\nonumber\\
				&\leq&\varepsilon\int_\Omega\left(\frac{u_1-u_{0j}(x)}{\tau}\right)^2\, dx+c(\varepsilon).\label{ui3}
				\end{eqnarray}
				Our claim is an easy consequence of (\ref{ui1}), (\ref{ui2}), and (\ref{ui3}).
				
					For $k=2,3,\cdots, j$, we derive from (\ref{s31}) that
					\begin{equation}
					\frac{1}{\tau}\left(\frac{u_k-u_{k-1}}{\tau}-\frac{u_{k-1}-u_{k-2}}{\tau}\right)-\Delta\left(\frac{\esk-e^{3\psi_{k-1}}}{\tau}\right)+(\psi_k-\psi_{k-1})=0\ \ \mbox{in $\Omega$.}
					\end{equation}
					Multiply through this equation by $\frac{u_k-u_{k-1}}{\tau}$ and integrate the resulting equation over $\Omega$ to obtain
					\begin{eqnarray}
					\lefteqn{\frac{1}{\tau}\int_\Omega
					\left(\frac{u_k-u_{k-1}}{\tau}-\frac{u_{k-1}-u_{k-2}}{\tau}\right)
					\frac{u_k-u_{k-1}}{\tau} \, dx}\nonumber\\
					&&-\int_\Omega\Delta
					\left(\frac{\esk-e^{3\psi_{k-1}}}{\tau}\right)\frac{u_k-u_{k-1}}{\tau} \, dx
					+\int_\Omega \frac{\uk-\uko}{\tau}(\psi_k-\psi_{k-1})\, dx =0.\label{otn4}
					\end{eqnarray}
					Note that 
					$$\left(\esk-e^{3\psi_{k-1}}\right)\mid_{\partial\Omega}=\left(u_k-u_{k-1}\right)\mid_{\partial\Omega}=0.$$
					The second integral on the left-hand side of (\ref{otn4}) can be evaluated from (L4) as follows:
					\begin{eqnarray}
				\lefteqn{-\int_\Omega\Delta\left(\frac{\esk-e^{3\psi_{k-1}}}{\tau}\right)\frac{u_k-u_{k-1}}{\tau} \,dx}\nonumber\\
				&=&
				-\int_\Omega\left(\frac{\esk-e^{3\psi_{k-1}}}{\tau}\right)
				\Delta\left(\frac{u_k-u_{k-1}}{\tau}\right) \, dx\nonumber\\
				&=&-\int_\Omega\left(\frac{\esk-e^{3\psi_{k-1}}}{\tau}\right)
				\left(\frac{e^{-\psi_k}-e^{-\psi_{k-1}}}{\tau}\right) \, dx\nonumber\\
				&\geq&3\int_\Omega\left(\frac{e^{\psi_{k}}-e^{\psi_{k-1}}}{\tau}\right)^2
				 \, dx. \label{otn2}
\end{eqnarray}
Invoking Proposition 3.1 and  (\ref{slt}), we have
\begin{eqnarray}
\left|\int_\Omega \frac{\uk-\uko}{\tau}(\psi_k-\psi_{k-1})\, dx \right|&\leq &\int_\Omega\left(\frac{u_k-u_{k-1}}{\tau}\right)^2\, dx+c\int_\Omega\left(\psi_k-\psi_{k-1}\right)^2
\, dx\nonumber\\
&\leq&\int_\Omega\left(\frac{u_k-u_{k-1}}{\tau}\right)^2\, dx+c.\label{otn5}
\end{eqnarray}
Using (\ref{otn2}) and (\ref{otn5}) in (\ref{otn4}) yields
\begin{eqnarray}
\lefteqn{\frac{1}{2\tau}\int_{\Omega}
\left(\left(\frac{u_k-u_{k-1}}{\tau}\right)^2-\left(\frac{u_{k-1}-u_{k-2}}{\tau}\right)^2\right)\,dx}\nonumber\\
&&+ 3\int_{\Omega}\left(\frac{e^{\psi_k}-e^{\psi_{k-1}}}{\tau}\right)^2\, dx
\leq \int_\Omega\left(\frac{u_k-u_{k-1}}{\tau}\right)^2\, dx+c..
\end{eqnarray}
Multiply through the inequality by $\tau$, sum up the resulting inequality over $k$, and thereby derive
\begin{equation}
\frac{1}{2}\int_{\Omega}
	\left(\partial_t\utj\right)^2\,dx+ 3\int_{\Omega_s}\left(\partial_t\rtj\right)^2\, dx\, dt
\leq \int_{\Omega_s}\left(\partial_t\utj\right)^2\, dx \, dt+c.
\end{equation}
An application of Gronwall's inequality establishes the desired result.
					\end{proof}
					
					In summary, we have:
	\begin{prop}There hold: 
			\begin{enumerate}
				\item[(C1)] the sequence $\{\rbj^3\}$ is bounded in $ L^\infty(0,T;W^{2,2}(\Omega))$;
			\item[(C2)] $\{\utj\}$ and $\{\rtj\}$ are both bounded in $W^{1,2}(\Omega_T)$.
		\end{enumerate}
	\end{prop}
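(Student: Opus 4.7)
The proposition essentially packages the bounds from Propositions 3.1--3.3 into a form ready for passage to the limit. The plan is to use the discrete equation (\ref{s31}) as a pointwise identity to convert the $L^2$-in-time bound on $\Delta\esbj$ given by Proposition 3.1 into the sharper $L^\infty$-in-time bound required by (C1), and then to exploit the piecewise linear / piecewise constant structure of the interpolants to translate the per-step estimates of Propositions 3.2 and 3.3 into $W^{1,2}(\omt)$ bounds on $\utj$ and $\rtj$.

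For (C1), I would rewrite (\ref{s31}) on each slab $(t_{k-1},t_k]$ as
\[
\Delta\esbj \;=\; \partial_t \utj + \tau\sbj,
\]
which is a pointwise identity in $L^2(\Omega)$ for each fixed $t$. Taking $L^2$-norms in space gives
\[
\|\Delta\esbj(\cdot,t)\|_{L^2(\Omega)} \;\leq\; \|\partial_t\utj(\cdot,t)\|_{L^2(\Omega)} + \tau\|\sbj(\cdot,t)\|_{L^2(\Omega)}.
\]
The first term is uniformly controlled by Proposition 3.3, and the second by $c\tau$, since the pointwise inequality recorded just before (\ref{slt}) combined with Proposition 3.1 gives $\|\sbj(\cdot,t)\|_{L^2(\Omega)}\leq c$ uniformly in $t$ and $j$. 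Hence $\{\Delta\esbj\}$ is bounded in $L^\infty(0,T;L^2(\Omega))$. Since $\esbj = \btj$ on $\Sigma_T$ and $\{\btj\}$ is bounded in $W^{2,2}(\Omega)$ by (H2), the Calderon--Zygmund estimate as used in (\ref{cz}) upgrades this to the required uniform $L^\infty(0,T;W^{2,2}(\Omega))$ bound on $\rbj^3=\esbj$.

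For (C2), the time-derivative bounds $\partial_t\utj\in L^\infty(0,T;L^2(\Omega))$ and $\partial_t\rtj\in L^2(\omt)$ are precisely Proposition 3.3. For the spatial gradients, the definition (\ref{dut}) expresses $\nabla\utj(\cdot,t)$ pointwise as a convex combination of $\nabla u_k$ and $\nabla u_{k-1}$, so $\|\nabla\utj(\cdot,t)\|_{L^2}^2\leq \max_k\|\nabla u_k\|_{L^2}^2$, bounded by Proposition 3.2. The analogous inequality $|\nabla\rtj|^2\leq 2(|\nabla e^{\psi_k}|^2+|\nabla e^{\psi_{k-1}}|^2)$ on $(t_{k-1},t_k]$ integrates to
\[
\int_{\omt}|\nabla\rtj|^2\,dxdt \;\leq\; 4\int_{\omt}|\nabla e^{\sbj}|^2\,dxdt,
\]
again bounded by Proposition 3.2. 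The zero-order $L^2(\omt)$ bounds for $\utj$ and $\rtj$ are inherited from the uniform $L^\infty$ bound on each $\uk$ established in the proof of Proposition 2.1 and from the uniform $L^2(\Omega)$ bound on $e^{\psi_k}$ implied by the $e^{2\sbj}$ estimate in Proposition 3.1, respectively.

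The only delicate point is the treatment of the regularizing term $\tau\sbj$ in (\ref{s31}): Proposition 3.1 supplies only $L^2$-in-space control on $\sbj$, so without the prefactor $\tau$ this contribution would obstruct the uniform $L^\infty(0,T;L^2)$ bound on $\Delta\esbj$. The explicit factor $\tau$ inserted in the scheme is exactly what kills it, consistent with the authors' earlier remark that this regularization is essential to the whole approach; once it is dispatched, (C1) and (C2) are formal consequences of Propositions 3.1--3.3.
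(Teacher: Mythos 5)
Your argument for (C1) and for the gradient and time-derivative parts of (C2) is essentially the paper's own proof: the identity $\Delta e^{3\overline{\psi}_j}=\partial_t\tilde u_j+\tau\overline{\psi}_j$, Proposition 3.3 for the first term, the bound $\int_\Omega(\overline{\psi}_j)^2dx\le c$ from (\ref{slt}) and Proposition 3.1 for the second, then Calderon--Zygmund with the boundary datum $b_{1j}^{-3}$; and the convex-combination structure of the interpolants together with Propositions 3.2 and 3.3 for the $W^{1,2}$ bounds. Your route to the zero-order $L^2$ bound on $\tilde\rho_j$ (convexity plus the $\max_t\int_\Omega e^{2\overline{\psi}_j}dx\le c$ estimate of Proposition 3.1) differs from the paper, which instead integrates $\partial_t\tilde\rho_j^2$ and applies Gronwall; your version is correct and, if anything, more direct.

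The one step that does not hold up is the zero-order bound on $\tilde u_j$. You invoke ``the uniform $L^\infty$ bound on each $u_k$ established in the proof of Proposition 2.1,'' but that bound is not uniform in $k$ or $\tau$: the lower bound on $u$ there passes through $\|e^{-\psi}\|_\infty\le e^M$ with $M\ge\tau^{-2}\left(\|v\|_\infty+\|b_0\|_\infty\right)$, so the constant blows up as $\tau\to 0$ and, worse, is propagated recursively from step to step through $v=u_{k-1}$. Those estimates serve only to close the Leray--Schauder fixed point argument at a single time step; they are useless as a priori bounds for the scheme. The fix is cheap and is what the paper does implicitly (``entirely similar''): write $\tilde u_j^2(x,s)-u_{0j}^2(x)=2\int_0^s\tilde u_j\,\partial_t\tilde u_j\,dt\le\int_0^s\tilde u_j^2\,dt+\int_0^s(\partial_t\tilde u_j)^2\,dt$, integrate over $\Omega$, and apply Gronwall using Proposition 3.3 and the boundedness of $\{u_{0j}\}$ in $L^2(\Omega)$; alternatively, combine Proposition 3.2 with Poincar\'e's inequality applied to $\overline{u}_j-b_{0j}$. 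With that substitution your proof is complete and coincides in substance with the paper's.
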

	\begin{proof} (C1) is an easy consequence of the preceding proposition and the equation 
		\begin{equation}
		\Delta \esbj=\frac{\partial\utj}{\partial t}+\tau\sbj\ \ \ \mbox{in $\omt$.}
		\end{equation}

		To see that $\{\rtj\}$ is bounded in $W^{1,2}(\Omega_T)$, we estimate
		\begin{eqnarray*}
			\rtj^2(x,s)-\rho_0^2(x)&=&\int_0^s\frac{\partial}{\partial t}\rtj^2(x,t)\, dt
			=2\int_0^s\rtj(x,t)\frac{\partial}{\partial t}\rtj(x,t)\, dt\\
			&\leq&\int_0^s\rtj^2(x,t)\, dt
			+\int_0^s\left(\frac{\partial}{\partial t}\rtj(x,t)\right)^2\, dt.
		\end{eqnarray*}
		Integrate this inequality over $\Omega$ and then apply Gronwall's inequality to obtain
		\begin{equation}
		\max_{0\leq t\leq T}\int_{\Omega}|\rtj|^2 \, dx\leq c.	
		\end{equation}					
	Next we compute
		\begin{eqnarray*}
			\lefteqn{\int_{\Omega_T}|\nabla\rtj|^2 \, dxdt}\nonumber\\
			 &=&
			\sum_{k=1}^j\int_{t_{k-1}}^{t_k}
			\int_{\Omega} \left|\frac{t-t_k}{\tau}\nabla e^{\psi_k}+(1-\frac{t-t_k}{\tau})\nabla e^{\psi_{k-1}}\right|^2dxdt\\
			&\leq&\sum_{k=1}^j\int_{t_{k-1}}^{t_k}\left[ \frac{t-t_k}{\tau}\int_{\Omega}\left|\nabla e^{\psi_k}\right|^2dx+(1-\frac{t-t_k}{\tau})\int_{\Omega}\left|\nabla e^{\psi_{k-1}}\right|^2dx\right]dt\\
			&=&\sum_{k=1}^j\tau\left[ \int_{\Omega}\left|\nabla e^{\psi_k}\right|^2dx+\int_{\Omega}\left|\nabla e^{\psi_{k-1}}\right|^2dx\right]\\
			%	&\leq&\sum_{k=1}^jc\tau\left[ \int_{\Omega}\left|\nabla^2 %\esk\right|^2dx+\int_{\Omega}\left|\nabla^2 \esko\right|^2dx\right]\\
			&\leq&c\left[ 2\int_{\Omega_T}\left|\nabla e^{\overline{\psi}_j} \right|^2dxdt+\tau\int_{\Omega}\left|\nabla e^{\psi_0}\right|^2dx\right]	\leq c.
		\end{eqnarray*}	
		The last step is due to Proposition 3.2.	
	The proof of the boundedness of $\{\utj\}	$ in $W^{1,2}(\Omega_T)$ is entirely similar. The proof is completed.	
		\end{proof}	
		\begin{prop}We have: 
			\begin{enumerate}
				\item[(R1)] $\{\ubj\}$ and $\{\rbj\}$ are both precompact in $L^2(\Omega_T)$;
				\item[(R2)] the sequence $\{\rbj^3\}$ is precompact in $ L^2(0,T;W^{1,2}(\Omega))$.
					\end{enumerate}
		\end{prop}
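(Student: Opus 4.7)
The strategy is to first derive (R1) from the $W^{1,2}(\omt)$ bounds on the piecewise linear interpolants $\rtj,\utj$ already established in Proposition 3.4, and then to promote $L^2(\omt)$ precompactness of $\{\rbj\}$ to $L^2(0,T;W^{1,2}(\Omega))$ precompactness of $\{\rbj^3\}$ by interpolating against the uniform $L^\infty(0,T;W^{2,2}(\Omega))$ bound of (C1).

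For (R1), a direct computation on each subinterval $(t_{k-1},t_k]$ gives $\rbj-\rtj=\frac{t_k-t}{\tau}(\rho_k-\rho_{k-1})$, and since $\int_{t_{k-1}}^{t_k}\frac{(t_k-t)^2}{\tau^2}\,dt=\tfrac{\tau}{3}$, one obtains
\begin{equation*}
\|\rbj-\rtj\|_{L^2(\omt)}^2=\tfrac{\tau^2}{3}\|\partial_t\rtj\|_{L^2(\omt)}^2\leq c\tau^2,
\end{equation*}
and exactly the same estimate holds for $\ubj-\utj$. Since $\omt\subset\mathbb{R}^{N+1}$ is a bounded Lipschitz domain, Rellich--Kondrachov furnishes the compact embedding $W^{1,2}(\omt)\hookrightarrow\hookrightarrow L^2(\omt)$, so by (C2) the sequences $\{\rtj\}$ and $\{\utj\}$ are precompact in $L^2(\omt)$; adding the $O(\tau)$ perturbations above yields the same conclusion for $\{\rbj\}$ and $\{\ubj\}$.

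For (R2), extract a subsequence (not relabeled) along which $\rbj\to\rho$ in $L^2(\omt)$ and a.e.\ on $\omt$, so $\rbj^3\to\rho^3$ a.e. By (C1) combined with the Sobolev embedding $W^{2,2}(\Omega)\hookrightarrow L^p(\Omega)$ for some $p>2$ (valid in every dimension $N$: $p=\infty$ if $N\leq 3$, any $p<\infty$ if $N=4$, and $p=2N/(N-4)$ if $N\geq 5$), the sequence $\{|\rbj^3|^2\}$ is bounded in $L^{p/2}(\omt)$ with $p/2>1$ and hence equi-integrable on $\omt$; Vitali's convergence theorem then gives $\rbj^3\to\rho^3$ in $L^2(\omt)$. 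Finally, the standard interpolation inequality $\|f\|_{W^{1,2}(\Omega)}^2\leq c\|f\|_{L^2(\Omega)}\|f\|_{W^{2,2}(\Omega)}$ on the bounded $C^2$ domain $\Omega$, applied pointwise in $t$ to $f=\rbj^3-\rho^3$ and combined with Cauchy--Schwarz in time, yields
\begin{equation*}
\|\rbj^3-\rho^3\|_{L^2(0,T;W^{1,2}(\Omega))}^2\leq c\|\rbj^3-\rho^3\|_{L^2(\omt)}\cdot\|\rbj^3-\rho^3\|_{L^2(0,T;W^{2,2}(\Omega))},
\end{equation*}
whose second factor is uniformly bounded by (C1) and whose first factor tends to zero. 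Thus $\rbj^3\to\rho^3$ in $L^2(0,T;W^{1,2}(\Omega))$, and since the argument applies to any subsequence of the original sequence, $\{\rbj^3\}$ is precompact in that space.

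The main obstacle is the upgrade from a.e.\ convergence of $\rbj$ to $L^2(\omt)$ convergence of $\rbj^3$: without a uniform $L^\infty$ bound on $\rbj$, the cube could a priori concentrate mass on small sets and destroy $L^2$ convergence. This is precisely what the $W^{2,2}\hookrightarrow L^{>2}$ embedding applied to $\rbj^3$ (rather than to $\rbj$) prevents; once equi-integrability of $|\rbj^3|^2$ is in hand, Vitali and the interpolation inequality do the rest mechanically.
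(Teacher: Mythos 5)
Your proof is correct, and for (R1) it is essentially the paper's argument: the same identity $\rtj-\rbj=-(t_k-t)\,\partial_t\rtj$ on each subinterval, the same $O(\tau)$ bound in $L^2(\omt)$ via Proposition 3.3, and Rellich--Kondrachov applied to the $W^{1,2}(\omt)$-bounded interpolants from (C2). For (R2) you diverge in a genuine way. The paper also first obtains $L^2(\omt)$ precompactness of $\{\rbj^3\}$ from (C1) and (R1) (asserted without detail; your Vitali/equi-integrability argument via $W^{2,2}(\Omega)\hookrightarrow L^p(\Omega)$, $p>2$, is a legitimate way to fill that in), but it then upgrades to $L^2(0,T;W^{1,2}(\Omega))$ by integrating by parts,
\begin{equation*}
\int_{\Omega_T}|\nabla(\rbj^3-\overline{\rho}_i^3)|^2\,dxdt
=-\int_{\Omega_T}\Delta(\rbj^3-\overline{\rho}_i^3)\,(\rbj^3-\overline{\rho}_i^3)\,dxdt
\leq c\Bigl(\int_{\Omega_T}(\rbj^3-\overline{\rho}_i^3)^2\,dxdt\Bigr)^{\frac12},
\end{equation*}
using the uniform $L^2(\omt)$ bound on $\Delta\rbj^3$ from Proposition 3.1 to show the sequence is Cauchy in the gradient norm. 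You instead invoke the interpolation inequality $\|f\|_{W^{1,2}(\Omega)}^2\leq c\|f\|_{L^2(\Omega)}\|f\|_{W^{2,2}(\Omega)}$ pointwise in $t$ followed by Cauchy--Schwarz in time. The two mechanisms are close cousins (the integration-by-parts identity is essentially the proof of that interpolation inequality for functions with controlled traces), but yours requires no boundary information at all, whereas the paper's identity tacitly discards a boundary term coming from $\rbj^3-\overline{\rho}_i^3=\btj-\left(b_{1i}(x)\right)^{-3}$ on $\Sigma_T$, which is small (the mollified data converge in $W^{2,2}$) but not identically zero; in that respect your route is slightly cleaner. Both approaches buy the same conclusion at the same cost, namely the uniform second-order bound in (C1).
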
	
			\begin{proof}
						For $t\in(t_{k-1}, t_k]$, we calculate from (\ref{dut}) and (\ref{dub}) that
						\begin{eqnarray*}
						\utj(x,t)-\ubj(x,t)&=&\frac{t-t_k}{\tau}(\uk-\uko)\\
						&=&(t-t_k)\frac{\partial}{\partial t}\utj(x,t).
						\end{eqnarray*}
						Consequently, we derive, with the aid of Proposition 3.3, that
						\begin{equation}
						\int_{\Omega}\left(\utj(x,t)-\ubj(x,t)\right)^2dx\leq\tau^2 \int_{\Omega}\left(\frac{\partial}{\partial t}\utj(x,t)\right)^2dx\leq c\tau^2, \ \ t\in(t_{k-1}, t_k].\label{uls}
						\end{equation}
						%$$
						Similarly, we have
								\begin{eqnarray}
							\int_{\Omega_T}|\rtj-\rbj|^2dxdt &=&\sum_{k=1}^{j}\int_{t_{k-1}}^{t_k}(t-t_k)^2\int_{\Omega}\left(\frac{e^{\psi_k}-e^{\psi_{k-1}}}{\tau}\right)^2dxdt
							\nonumber\\
							&=&\sum_{k=1}^{j}\tau^3\int_{\Omega}\left(\frac{\partial}{\partial t}\rtj\right)^2dx\nonumber\\
							&=&\tau^2\int_{\Omega_T}\left(\frac{\partial}{\partial t}\rtj\right)^2dxdt\leq c\tau^2.\label{rls}
							\end{eqnarray}
							This implies (C1).
						
							% This together with (\ref{otn11}) implies the desired result.
							%To see that , 
							We can infer from (C1) and (R1) that $\{\rbj^3\}$ is precompact in $ L^2(\Omega_T))$.
							 Then we compute
							\begin{eqnarray}
							\int_{\Omega_T}|\nabla(\rbj^3-\overline{\rho}_i^3)|^2\, dxdt
							&=&\int_{\Omega_T}(\Delta\esbj-\Delta e^{\overline{3\psi}_i})(\rbj^3-\overline{\rho}_i^3)\, dxdt\nonumber\\
							&\leq &c\left(\int_{\Omega_T}(\rbj^3-\overline{\rho}_i^3)^2\, dxdt\right)^{\frac{1}{2}}
							\end{eqnarray}
							for each $i,j$.  This implies the desired result.
							%This immediately yields (C3). The proof is finished.
							
					\end{proof}

			We are ready to prove Theorem 1.1.
			\begin{proof}[Proof of Theorem 1.1]								
						Passing to subsequences if necessary, we may assume
						\begin{eqnarray}
						\utj &\rightharpoonup & \mbox{$u$ weakly in $W^{1,2}(\Omega_T)$ and strongly in $L^2(\Omega_T)$},\label{utl}\\
						\rtj &\rightharpoonup & \mbox{$\rho$ weakly in $W^{1,2}(\Omega_T)$ and strongly in $L^2(\Omega_T)$}\label{rtl}.
						\end{eqnarray}
						In view of (\ref{uls}) and (\ref{rls}), we immediately have 
							\begin{eqnarray}
						\rbj &\rightharpoonup &\rho \mbox{ weakly in $L^2(0,T;W^{1,2}(\Omega))$ and strongly in $L^2(\Omega_T)$,}\nonumber\\
						&&\mbox{ and therefore a.e. on $\Omega_T$ (pass to a further subsequence if need be)},\label{otn16}\\
							\rbj^3 &\rightarrow &\rho^3 \mbox{ weakly in $L^2(0,T;W^{2,2}(\Omega))$}\nonumber\\
							&&\mbox{ and stronly in $L^2(0,T;W^{1,2}(\Omega))$ },\label{otn17}\\
								\ubj &\rightharpoonup &u \mbox{ weakly in $L^2(0,T;W^{1,2}(\Omega))$, strongly in $L^2(\Omega_T)$,}\nonumber\\
								&&\mbox{ and a.e. on $\Omega_T$}.\label{otn15}
						\end{eqnarray}
						Proposition 3.1 also asserts that
						\begin{equation}
						\sbj\rightharpoonup \psi \ \ \mbox{weakly in $L^p(\Omega_T)$ for each $p>1$.}
						\end{equation}
						Thus we can easily pass to the limit in (\ref{omm1}) to obtain
						\begin{equation}
						\partial_tu=\Delta \rho^3\ \ \mbox{a.e on $\omt$.}\label{osm333}
						\end{equation}
							The key question to our development is how to justify passing to the limit in (\ref{omm2}). The difficulty lies in the fact that we do not have enough controls over the sequence
							$\{\frac{1}{\rbj}\}$. In fact, we only have an
							%uniform integrability conditionon
							$L^1$ bound for the sequence (Proposition 3.1).
							% with respect to $j$
							%, which gives rise to the possibility of the so-called concentration %and osci in the sequence.
						With the aid of Fatou's Lemma, we deduce from Proposition 3.1 that
						$$\int_{\Omega_T}\frac{1}{\rho} \, dxdt
						\leq\liminf_{j\rightarrow\infty}\int_{\Omega_T}\frac{1}{\rbj}  \, dxdt\leq c.$$
						Therefore, the set
						$$A_0=\{(x,t)\in\Omega_T:\rho(x,t)=0\}$$
						has Lebesque measure $0$. 
						This combined with (\ref{otn16}) asserts that
							\begin{equation}
						\frac{1}{\rbj} \rightarrow \frac{1}{\rho} \ \ \ \mbox{a.e. on $\Omega_T$}.
							\end{equation}
							 Recall (\ref{omm2}) to obtain
						\begin{equation}
						\Delta\ubj\rightarrow \frac{1}{\rho} \ \ \ \mbox{a.e. on $\Omega_T$}.\label{otn13}
						\end{equation}
						%On the other hand, we conclude from Proposition 3.2 and Proposition 3.5 
						Observe from Proposition 3.2 that 
						\begin{equation}
						\mbox{the sequence
							$\{\Delta\ubj\}$ is  bounded in $L^2(0,T;W^{-1,2}(\Omega))$.}
						\end{equation} Hence we have
							\begin{equation}
							\Delta\ubj\rightharpoonup \Delta u\equiv\mu \ \ \ \mbox{weakly in both $\mathcal{M} (\Omega_T)$ and $L^2\left(0,T;W^{-1,2}(\Omega)\right)$}.\label{ow1}
							\end{equation}
The key issue is: do we have
$$\Delta u=\mu=\frac{1}{\rho}?$$
The following proposition addresses this issue.
\begin{prop}The restriction of $\mu$ to the set $\Omega_T\setminus A_0$ is a function. This function is exactly $\frac{1}{\rho}$. That is, the Lebesgue decomposition of
	$\mu$ with respect to the Lebesgue measure is $\frac{1}{\rho}+\nu_s$, where $\nu_s$ is a measure supported in $A_0$, and we have
	\begin{equation}
	\frac{1}{\rho}=\mu \ \ \mbox{on the set $\Omega_T\setminus A_0$.}
	\end{equation}
	%That is, $\frac{1}{\rho}$ is the function $g_a$ in the definition of a weak solution.
\end{prop}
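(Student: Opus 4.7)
The plan is to identify $\mu$ with the function $1/\rho$ on $\Omega_T\setminus A_0$ through three steps built around multiplying the approximate identity $\Delta\overline{u}_j=\overline{\rho}_j^{-1}$ (from (\ref{omm2})) by the factor $\overline{\rho}_j^{\,3}$ before passing to the limit. The fortunate algebraic fact that $\overline{\rho}_j^{\,3}\,\Delta\overline{u}_j=\overline{\rho}_j^{\,2}$ converts a troublesome product of a weak-limit measure with an a.e.-convergent factor into an identity whose limit I can evaluate by standard weak/strong duality.

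Step 1 (lower bound): First I would show $\mu\ge\rho^{-1}\mathcal{L}^{N+1}$ as measures. For $\xi\in C_c^\infty(\Omega_T)$ with $\xi\ge 0$, the weak-$*$ convergence (\ref{ow1}) combined with Fatou's lemma applied to the pointwise convergence $\overline{\rho}_j^{-1}\to\rho^{-1}$ (a.e.\ on $\Omega_T\setminus A_0$, and $A_0$ is Lebesgue-null) yields $\int\xi\,d\mu=\lim_j\int\xi\,\overline{\rho}_j^{-1}\,dxdt\ge\int\xi\,\rho^{-1}\,dxdt$. Writing the Lebesgue decomposition $\mu=f\,\mathcal{L}^{N+1}+\mu_s$ with $f\in L^1$ and $\mu_s\perp\mathcal{L}^{N+1}$, this inequality forces $f\ge 1/\rho$ a.e.\ on $\Omega_T$.

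Step 2 (key identity): Multiply (\ref{omm2}) by $\xi\,\overline{\rho}_j^{\,3}$, which lies in $L^2(0,T;W_0^{1,2}(\Omega))$ because $\xi\in C_c^\infty(\Omega_T)$. Integration by parts produces no boundary terms and gives
\begin{equation*}
\int_{\Omega_T}\xi\,\overline{\rho}_j^{\,2}\,dxdt=-\int_{\Omega_T}\nabla\!\left(\xi\,\overline{\rho}_j^{\,3}\right)\cdot\nabla\overline{u}_j\,dxdt.
\end{equation*}
The left side tends to $\int\xi\,\rho^2\,dxdt$ because $\overline{\rho}_j\to\rho$ strongly in $L^2(\Omega_T)$ with $\overline{\rho}_j$ bounded in $L^\infty(0,T;L^2(\Omega))$ by Proposition~3.1, so $\overline{\rho}_j^{\,2}\to\rho^2$ in $L^1(\Omega_T)$. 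On the right side I combine the strong convergence $\overline{\rho}_j^{\,3}\to\rho^3$ in $L^2(0,T;W^{1,2}(\Omega))$ from (\ref{otn17}) with the weak convergence $\nabla\overline{u}_j\rightharpoonup\nabla u$ in $L^2(\Omega_T)$ (Proposition~3.2 and (\ref{otn15})) to obtain the limit $-\int\nabla(\xi\rho^3)\cdot\nabla u\,dxdt$. Reinterpreting this via the $W_0^{1,2}$--$W^{-1,2}$ pairing with $\Delta u=\mu$ and using a $W^{2,2}$-quasi-continuous (hence $\mu$-measurable) representative of $\rho^3$ yields
\begin{equation*}
\int_{\Omega_T}\xi\,\rho^3\,d\mu=\int_{\Omega_T}\xi\,\rho^2\,dxdt\qquad\forall\,\xi\in C_c^\infty(\Omega_T).
\end{equation*}

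Step 3 (conclusion): Substituting $\mu=f\,\mathcal{L}^{N+1}+\mu_s$ into the key identity gives, for every $\xi\in C_c^\infty(\Omega_T)$,
\begin{equation*}
\int_{\Omega_T}\xi\bigl(\rho^2-\rho^3 f\bigr)\,dxdt=\int_{\Omega_T}\xi\,\rho^3\,d\mu_s.
\end{equation*}
The left side represents an absolutely continuous measure and the right side a singular one (a Borel function against a measure singular with respect to Lebesgue), so uniqueness of the Lebesgue decomposition forces $\rho^3 f=\rho^2$ a.e.\ and $\rho^3\,d\mu_s=0$. The first identity gives $f=1/\rho$ a.e.\ on $\{\rho>0\}=\Omega_T\setminus A_0$ (consistent with Step 1, which simultaneously excludes the set $\{\rho=0\}$ from having positive Lebesgue measure where $f$ could differ), and the second, together with $\rho\ge 0$ and the quasi-continuous representative, forces $\mu_s$ to be supported on $A_0=\{\rho=0\}$.

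Main obstacle: The delicate step is justifying the identification $-\int\nabla(\xi\rho^3)\cdot\nabla u\,dxdt=\int\xi\rho^3\,d\mu$ in Step 2. This is a question of interpreting the $W_0^{1,2}$--$W^{-1,2}$ duality in terms of integration against the measure $\mu$; it requires selecting a pointwise $\mu$-measurable representative of $\rho^3$. Since $\rho^3\in L^\infty(0,T;W^{2,2}(\Omega))$, the $W^{2,2}$-quasi-continuous representative is available and is well adapted to the positive measure $\mu\in W^{-1,2}$ (which does not charge sets of zero $W^{1,2}$-capacity), so the measure-theoretic and distributional interpretations agree on both the absolutely continuous and singular parts of $\mu$.
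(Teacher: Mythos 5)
Your proposal is correct, and it reaches the proposition by a genuinely different route from the paper. The paper truncates: it multiplies (\ref{omm2}) by $\xi\,\te(\rbj^3)$ so that $\te(\rbj^3)/\rbj$ is uniformly bounded for each fixed $\varepsilon$, passes to the limit $j\to\infty$ using the strong $L^2(0,T;W^{1,2})$ convergence of $\te(\rbj^3)$ against the weak $L^2(0,T;W^{-1,2})$ convergence of $\Delta\ubj$, and then sends $\varepsilon\to 0$ by dominated convergence to obtain $\int_{\Omega_T\setminus A_0}\xi\,d\mu=\int_{\Omega_T\setminus A_0}\rho^{-1}\xi\,dxdt$ directly. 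You instead exploit the algebraic cancellation $\rbj^3\,\Delta\ubj=\rbj^2$, which removes the singular factor in one stroke, pass to a single limit to get $\int\xi\rho^3\,d\mu=\int\xi\rho^2\,dxdt$, and then invoke uniqueness of the Lebesgue decomposition to split this into $\rho^3 f=\rho^2$ a.e.\ and $\rho^3\mu_s=0$. What your version buys is the elimination of the two-parameter limit and of the dominated-convergence step in $\varepsilon$; what it costs is the extra (routine, since everything in sight is non-negative and the total masses are finite) decomposition argument at the end, and your Step 1 is in fact redundant once Step 3 is in place. Both proofs rest on exactly the same delicate point, namely the identification $\int_0^T\langle\mu,v\rangle\,dt=\int_{\Omega_T}v\,d\mu$ for $v\in L^2(0,T;W_0^{1,2}(\Omega))$ via a $\mu$-measurable (quasi-continuous) representative — the paper asserts this with the same level of justification you give — so your argument is on equal footing there. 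The only cosmetic differences are that you test with $\xi\in C_c^\infty(\Omega_T)$ where the paper allows $\xi$ vanishing only on $\Sigma_T$ (immaterial for identifying the Lebesgue decomposition on $\Omega_T$), and that ``supported on $A_0$'' should strictly read ``concentrated on $A_0$,'' an imprecision the paper shares.
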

\begin{proof} The proof is almost identical to that in \cite{LX}. For the reader's convenience, we reproduce it here.
	Keep in mind that since $\mu\in L^2\left(0,T;W^{-1,2}(\Omega)\right)$ each function in $L^2\left(0,T;W_0^{1,2}(\Omega)\right)$ is $\mu$-measurable,  and thus it is well-defined except on a set of $\mu$ measure $0$. Furthermore,
	$\int_{0}^{T} \langle \mu, v \rangle \, dt=\int_{\Omega_T} v \, d\mu$ 
	for each $v\in L^2\left(0,T;W_0^{1,2}(\Omega)\right) $, where $\langle\cdot,\cdot\rangle$ denotes the duality pairing between $L^2(0,T; W^{-1,2}(\Omega))$ and $L^2(0,T; W_0^{1,2}(\Omega))$.
	For $\varepsilon>0$ let $\te$ be a smooth function on $\mathbb{R}$ having the properties
	$$\te(s)=\left\{\begin{array}{ll}
	1 &\mbox{if $s\geq 2\varepsilon$,}\\
	0&\mbox{if $s\leq \varepsilon$ \hspace{.5in} and}
	\end{array}\right.$$
	$$
	0\leq\te\leq 1\ \ \mbox{on $\mathbb{R}$.}$$
	Then it is easy to verify from (\ref{otn17}) that we still have
	\begin{equation}
	\te(\rbj^3)\rightarrow\te(\rho^3)\ \ \mbox{strongly in $L^2(0,T;W^{1,2}(\Omega))$.}\label{otn14}
	\end{equation}
	Pick a function $\xi$ from $C^\infty(\mathbb{R}^N\times\mathbb{R})$ with $\xi\mid_{\Sigma_T}=0$. Multiply through (\ref{omm2}) by $\xi\, \te(\rbj^3)$ and integrate the resulting equation over $\Omega$ to obtain
	\begin{equation}
	\int_{\Omega_T}\Delta\ubj\te(\rbj^3)\, \xi \, dxdt
	=\int_{\Omega_T}\frac{1}{\rbj} \, \te(\rbj^3)\, \xi \, dxdt.\label{otn20}
	\end{equation}
	For each fixed $\varepsilon$  the sequence $\{\frac{1}{\rbj} \, \te(\rbj^3)\}$ is bounded in $L^\infty(\Omega_T)$. This, along with (\ref{otn16}), gives
	$$\int_{\Omega_T}\frac{1}{\rbj} \, \te(\rbj^3)\, \xi \, dxdt\rightarrow\int_{\Omega_T}\te(\rho^3)\frac{1}{\rho}\, \xi \, dxdt.$$
	Observe from (\ref{otn14}) and (\ref{ow1}) that
	\begin{equation}
	\int_{\Omega_T}\Delta\ubj \, \te(\rbj^3)\, \xi\, dxdt
	=\int_{0}^{T} \langle \Delta\ubj,\te(\rbj^3)\, \xi \rangle\, dt
	\rightarrow\int_{\Omega_T}\te(\rho^3)\, \xi \, d\mu.
	\end{equation}
	Taking $j\rightarrow\infty$ in (\ref{otn20}) yields
	\begin{equation}
	\int_{\Omega_T}\te(\rho^3)\,\xi \,d\mu=\int_{\Omega_T}\te(\rho^3)\frac{1}{\rho} \, \xi \, dxdt.\label{otn21}
	\end{equation}
	%Remember that
	%$\rho$ is $\mu$-measurable. That is to say,
Remember that $\rho^3\in L^\infty\left(0,T;W^{2,2}(\Omega)\right)$, and thus it is well-defined except on a set of $\mu$ measure $0$. We can easily conclude from the definition of $\te$ that $\{\te(\rho^3)\}$ converges everywhere on the set where $\rho$ is defined as $\varepsilon\rightarrow 0$. With the aid of the Dominated Convergence Theorem, we can take $\varepsilon\rightarrow 0$ in (\ref{otn21}) to obtain
	$$\int_{\Omega_T\setminus A_0}\, \xi \, d\mu=\int_{\Omega_T\setminus A_0}\frac{1}{\rho} \, \xi \, dxdt.$$
	This is true for every $\xi\in C^\infty(\mathbb{R}^N\times\mathbb{R})$ with $\xi\mid_{\Sigma_T}=0$, which means
	\begin{equation}
	\mu=\frac{1}{\rho}\ \ \ \mbox{on $\Omega_T\setminus A_0$.}
	\end{equation}
%$$	\mu=\frac{1}{\rho}\ \ \ \mbox{on $\Omega_T\setminus A_0$.}$$
The proof is complete.
\end{proof}
With Proposition 3.6, we have concluded the proof of Theorem 1.1.\end{proof}

\begin{proof}[Proof of Theorem 1.2]
  Let $\xi$ be a function in $C^\infty(\mathbb{R}^N\times\mathbb{R})$ with the properties
  $$\xi=0\ \ \mbox{on $\Sigma_T$ and}\ \ \ \xi\geq 0 \ \ \mbox{on $\omt$.}$$
  We multiply each term in (\ref{sfa1}) by $\esk\xi$ and integrate the resulting equation over $\Omega$ to obtain
  \begin{equation}
  %\frac{u_k-u_{k-1}}{\tau}
 \int_{\Omega} \frac{e^{-\psi_k}-e^{-\psi_{k-1}}}{\tau}\esk\xi\ dx-\int_{\Omega}\Delta^2\esk\esk\xi\ dx+\tau\int_{\Omega}\Delta\psi_k\esk\xi\ dx=0.\label{sfa5}
  \end{equation}
  Note that
  \begin{eqnarray}
  \left(e^{-\psi_k}-e^{-\psi_{k-1}}\right)e^{2\psi_k}&\leq&\int_{e^{-\psi_{k-1}}}^{e^{-\psi_k}}s^{-2}ds\nonumber\\
  	&=&-\left(e^{\psi_k}-e^{\psi_{k-1}}\right).\label{sfa3}
  \end{eqnarray}
  Keeping in mind (\ref{sfa2}), we calculate
  \begin{eqnarray}
  \lefteqn{-\int_\Omega\Delta^2\esk\esk\xi\, dx}\nonumber\\
  &=&\int_\Omega\nabla\left(\Delta\esk+\tau\ln\boj\right)\nabla\left(\esk\xi\right)\, dx\nonumber\\
  &&-\tau\int_\Omega\nabla\ln\boj\nabla\left(\esk\xi\right)\, dx\nonumber\\
  &=&-\int_\Omega\left(\Delta\esk+\tau\ln\boj\right)\Delta\left(\esk\xi\right)\, dx\nonumber\\
  &&+\tau\int_\Omega\left(\Delta\ln\boj\right)\esk\xi\, dx.\label{sfa4}
  \end{eqnarray}
  Use (\ref{sfa3}) and (\ref{sfa4}) in (\ref{sfa5}), multiply through the resulting equation by $-\tau$, and sum up over $k$ to obtain
  \begin{eqnarray}
  \lefteqn{ \int_{\Omega_T}\partial_t\rtj\rbj\xi dxdt+\int_{\Omega_T}\left(\Delta\rbj^3\right)^2\xi dx dt}\nonumber\\
&&+\int_{\Omega_T}\Delta\rbj^3\left(2\nabla\rbj^3\nabla\xi+\rbj^3\Delta\xi\right) dxdt\nonumber\\
&\leq&-\tau\int_{\Omega_T}\nabla\overline{\psi_j}\nabla\left(\esbj\xi\right) dxdt-\tau\int_{\Omega_T}\nabla\ln\boj\Delta\left(\esbj\xi\right) dxdt\nonumber\\
&&+\tau\int_{\Omega_T}\Delta\left(\ln\boj\right)\esbj\xi dxdt.\label{sfa7}
 \end{eqnarray}
 We estimate from Proposition 3.1 that
 \begin{eqnarray}
\lefteqn{ -\tau\int_{\Omega_T}\nabla\overline{\psi_j}\nabla\left(\esbj\xi\right) dxdt}\nonumber\\
&\leq&-\tau\int_{\Omega_T}\esbj\nabla\overline{\psi_j}\nabla \xi dxdt\nonumber\\
 &\leq&\sqrt{\tau}\left(\tau\int_{\Omega_T}\esbj|\nabla\overline{\psi_j}|^2 dxdt\right)^{\frac{1}{2}}\left(\int_{\Omega_T}\esbj|\nabla\xi|^2 dxdt\right)^{\frac{1}{2}}\nonumber\\
 &\leq &c\sqrt{\tau}\left(\int_{\Omega_T}\esbj|\nabla\xi|^2 dxdt\right)^{\frac{1}{2}}.
 \end{eqnarray}
 Use this in (\ref{sfa7}) and take  $\tau\rightarrow 0$ (or equivalently,  $j\rightarrow \infty$ ) in the resulting inequality to obtain
 \begin{eqnarray}
 \lefteqn{ \int_{\Omega_T}\partial_t\rho\rho\xi dxdt+\int_{\Omega_T}\left(\Delta\rho^3\right)^2\xi dx dt}\nonumber\\
 &&+\int_{\Omega_T}\Delta\rho^3\left(2\nabla\rho^3\nabla\xi+\rho^3\Delta\xi\right) dxdt\leq 0.\label{sfa8}
 \end{eqnarray}
 Here we have used the fact that the norm in a Banach space is weakly lower semi-continuous, i.e., 
 \begin{equation}
 \liminf_{j\rightarrow \infty}\int_{\Omega_T}\left(\Delta\rbj^3\right)^2\xi dx dt\geq\int_{\Omega_T}\left(\Delta\rho^3\right)^2\xi dx dt.
 \end{equation}
 We conclude (\ref{ssa22}) from (\ref{sfa8}). This completes the proof of Theorem 1.2.
 \end{proof}
% We can derive from (\ref{sfa8}) that
% \begin{equation}
% \partial_t\rho\rho+\left(\Delta\rho^3\right)^2-2\mbox{div}\left(\Delta\rho^3\nabla\rho^3\right)+\Delta\left(\rho^3\Delta\rho^3\right)\leq 0\label{sma1}
% \end{equation}
% in the sense of distributions.
 
 If $\{\rbj^3\}$ is precompact in $L^2(0,T; W^{2,2}(\Omega))$, then we can easily show that the equality in (\ref{ssa22}) holds. This does not seem likely because the sequence $\{\rbj^3\}$ is not bounded away from $0$ below uniformly.
 %$\rho$ satisfies (\ref{ssa23}).
%-\tau\int_{\Omega_T}\nabla\ln\boj\Delta\left(\esbj\xi\right) dxdt\nonumber\\
%&&+\tau\int_{\Omega_T}\Delta\left(\ln\boj\right)\esbj\xi dxdt\rightarrow 0\ \ \mbox{as.}
 
If the space dimension is $1$, then $\rho^3$ is continuous on $\overline{\omt}$. Unfortunately, our approximate solutions $\rbj^3$ are not continuous in the t-variable. It is natural to ask if we can construct a sequence of approximate solutions that are H\"{o}lder continuous on $\overline{\omt}$. In this regard, one immediately thinks of the sequence $\{\rtj\}$. However, it does not seem possible that we can show the
sequence $\{\rtj^3\}$ is bounded in $W^{1,2}(\omt)\cap L^\infty(0,T; W^{2,2}(\Omega))$.
It turns out that the sequence $\{\ctj\}$ defined by
%We introduce a new function
\begin{equation}
\ctj(x,t)=\frac{t-t_{k-1}}{\tau}\rho_k^3(x)+\frac{t_k-t}{\tau}\rho^3_{k-1},\ \ \ t\in(t_{k-1}, t_k]
\end{equation}
will do the trick. To see this,
%Finally, we indicate how our approximation scheme can be used to obtain the existence result in %\cite{GLL2}. We can easily 
we derive from (C1) that
 \begin{equation}
 \|\rbj^3\|_\infty\leq c,\ \ \ \|\partial_x\rbj^3\|_\infty\leq c.\label{osm1}
 \end{equation}
  It immediately follows from (C1) that 
 \begin{equation}
 \mbox{$\{\ctj\}$ is bounded in $L^\infty(0,T:W^{2,2}(\Omega))$.}
 \end{equation}
 For $ t\in(t_{k-1}, t_k]$ we compute
 \begin{equation}
 \partial_t\ctj=\frac{1}{\tau}(\rk^3-\rho_{k-1}^3)=\frac{\rho_k-\rho_{k-1}}{\tau}(\rk^2+\rk\rho_{k-1}+\rho_{k-1}^2),
 \end{equation}
 from whence follows
 \begin{equation}
 |\partial_t\ctj|\leq 3|\partial_t\rtj|\|\rbj\|^2_\infty.
 \end{equation}
 This together with (C2) and (\ref{osm1}) implies that
 \begin{equation}
 \mbox{$\{\ctj\}$ is bounded in $W^{1,2}(\Omega_T)$.}
 \end{equation}
 Now we are in a position to use Lemma 2.2, from which we calculate
 %conclude that Observe that
  for $t\in(t_{k-1}, t_k]$ that
  % there holds
 \begin{eqnarray}
 \left|\ctj-\rbj^3\right|&=&\left|\frac{t-t_k}{\tau}(\rk^3-\rho_{k-1}^3)\right|\nonumber\\
 &=&\left|\frac{t-t_k}{\tau}(\ctj(x, t_k)-\ctj(x, t_{k-1})\right|\nonumber\\
 &\leq& c\tau^{\frac{1}{4}}.\label{osm222}
 \end{eqnarray}
 Subsequently, we have
 \begin{equation}
 \mbox{ $\{\ctj\}$ converges uniformly to $\rho^3$ on
 	$\overline{\Omega_T}$.}
 \end{equation}

  \bigskip
  
  \noindent{\bf Acknowledgment:} The research of JL was partially supported by
  KI-Net NSF RNMS grant No. 1107291 and NSF grant DMS 1514826.

\end{document}